\numberwithin{equation}{section}
\newtheorem{theorem}{Theorem}[section]
\newtheorem{lemma}[theorem]{Lemma}
\newtheorem{proposition}[theorem]{Proposition}
\newtheorem{definition}[theorem]{Definition}
\begin{document}
\title{The cone Moser-Trudinger  inequalities and their applications }
\author{Fei Fang \footnote{ }\\  \small  \emph{\small Department of Mathematics,
Beijing Technology and Business University }\\ \emph{ \small  Beijing, 100048, China} \\   Chao Ji \footnote{Corresponding author. E-mail address:  jichao@ecust.edu.cn(C. Ji).}  \\ \small  \emph{Department of Mathematics, East China University of Science and Technology}\\  \small \emph{Shanghai, 200237, China}}
\maketitle

\noindent \textbf{\textbf{Abstract:}}
 In this article, we firstly study the cone Moser-Trudinger  inequalities and their best exponents $\alpha_2$ on both bounded  and unbounded domains $\mathbb{R}^2_{+}$.
Then, using the cone Moser-Trudinger  inequalities, we study the existence of weak solutions to
the nonlinear  equation
\begin{equation*}
\left\{
\begin{array}{ll}
-\Delta_{\mathbb{B}} u=f(x,  u), &\mbox{in}\  x\in \mbox{int} (\mathbb{B}),    \\
u= 0,  &\mbox{on}\  \partial\mathbb{B},
\end{array}
\right.
\end{equation*}
  where $\Delta_{\mathbb{B}}$ is Fuchsian type Laplace operator investigated
with totally characteristic degeneracy on the boundary $x_1 =0$, and  the nonlinearity $f$ has the subcritical exponential growth or the critical exponential growth.

 \noindent \textbf{Keywords}: Cone Moser-Trudinger inequalities,  Mellin transform, mountain pass theorem,   weak solution.

\section{Introduction}
Let $\Omega\subset \mathbb{R}^N$ be an open set, $N\geq 2$.  It is well-known that $W_0^{1,p}(\Omega)\subset L^{\frac{Np}{N-p}}$ if $1\leq p<N$, and
$W_0^{1,p}(\Omega) \subset L^{\infty}(\Omega)$, if $p>N$. The case $p=N$ is the limit case of these imbeddings and it is known that
$W_0^{1,N}(\Omega) \subset L^{q}(\Omega)$ for $N\leq q<\infty$ and $W_0^{1,N}(\Omega) \not\subset L^{\infty}(\Omega)$.

Trudinger \cite{Tr} and Pohozaev \cite{Pohozaev}  found independently that the maximal
growth is of exponential type. More precisely, there exist two positive constants $\alpha$ and $C$ depending
only on $N$ such that
\begin{equation}\label{e39}
\int_{\Omega} e^{\alpha\left(\frac{|u(x)|}{|\nabla  u(x)|_N}\right)^{\frac{N}{N-1}}}dx\leq C|\Omega|
\end{equation}
for $u\in W_0^{1,N}(\Omega)\setminus\{0\}$, where the constants $\alpha, C$ are independent of $u$ and $\Omega$. In order to prove \eqref{e39}, Trudinger in \cite{Tr} used a combination of the power series expansion of the exponential function and sharp multiplicative inequalities
$$|u|_q\leq C(N,q)|u|_N^{\frac{N}{q}}|\nabla u|_N^{1-\frac{N}{q}}.$$

There are many types of extensions for the Trudinger-Moser inequality.  The first one is to find the
best exponents in (\ref{e39}). Moser \cite{Mo} (see also \cite{Len} )  showed  that (\ref{e39}) holds for $\alpha\leq \alpha_N$ but not for $\alpha>\alpha_N$,
where $$\alpha_N=N\omega_{N-1}^{\frac{1}{N-1}}$$ and $\omega_{N-1}$ is the surface area of the unit sphere in $\mathbb{R}^N$.  Moser used  symmetrization of functions and reduce (\ref{e39}) to one-dimensional inequality. And the reader can be  referred to \cite{Car,Flu, Mcl, Str} for the attainability of

\begin{equation}\label{e40}
\sup\left\{\int_{\Omega} e^{\alpha_N\left(\frac{|u(x)|}{|\nabla  u(x)|_N}\right)^{\frac{N}{N-1}}}dx: u\in W_0^{1,N}(\Omega)\setminus\{0\}\right\}.
\end{equation}

The second direction is to extend Trudinger's result for unbounded domains and
for Sobolev spaces of higher order and fractional order (see \cite{Adac,Ada,Cao,Oga,Oza,Stri}).
In  \cite{AF,Oga,Oza}, the following Trudinger Type inequality was studied without  best exponents

\begin{equation}\label{e41}
\int_{\mathbb{R}^N}\left( e^{\alpha\left(\frac{|u(x)|}{|\nabla  u(x)|_N}\right)^{\frac{N}{N-1}}}-\sum_{j=0}^{N-2}\frac{1 }{j!}\left(\alpha\left(\frac{|u(x)|}{|\nabla  u(x)|_N}\right)^{\frac{N}{N-1}}\right)^j\right)dx\leq C \frac{|u(x)|_N^N}{|\nabla  u(x)|_N^N}
\end{equation}
for $u\in W_0^{1,N}(\mathbb{R}^N)\setminus\{0\}$. In \cite{Adac}, the best exponents $\alpha$ in (\ref{e41})   was obtained, moreover, by Moser's idea,
a simplified proof for  (\ref{e41}) was given. With regard to the case of higher order derivatives, since the symmetrization is not available, D. Adams \cite{Ada}
 proposed a new idea to find the sharp constants for higher order Moser’s type
inequality, that is, to express $u$ as the Riesz potential of its gradient of order $m$, and
then apply O'Neil's result on the rearrangement of convolution functions and use
techniques of symmetric decreasing rearrangements.

In \cite{zhang}, the authors proved an   affine Moser-Trudinger inequality. The authors of  \cite{luguozhen}  proved the
sharp singular affine Moser-Trudinger inequalities on both bounded and unbounded domains in $\mathbb{R}^N$ and they
improved Adams type inequality in the spirit of Lions \cite{Lions}.

Another extension is to establish the Trudinger-Moser inequality and the Adams inequality
on compact Riemannian manifolds and noncompact Riemannian manifolds  (see \cite{Lixy, Yang} ).

Moser-Trudinger inequalities have played important roles and have
been widely used in geometric analysis and PDEs, see for example  \cite{chang,do,Fig,LamLu,tian}, 
and references therein.
The main purpose of this paper is to study the cone Moser-Trudinger  inequalities and their applications. To the best of our knowledge,
the related research is rare and

The outline of this paper are as follows. In Section 2 we give some preliminaries, such as the definition of the cone Sobolev spaces and
some lemmas which will be used in the later sections. In Section 3, we give the cone Moser-Trudinger  inequalities and their proofs. In Section 4, as the applications of the cone Moser-Trudinger inequalities,  the existence of multiple solutions to the degenerate elliptic equations
with the subcritical exponential growth or the critical exponential growth will be discussed. Our main results are Theorem \ref{t0}, Theorem \ref{t2},  Theorem \ref{t3}, Theorem \ref{t41} and Theorem \ref{t42}.

\section{Cone Sobolev spaces}
In this section we introduce the manifold with conical singularities and the corresponding
cone Sobolev spaces.

Let $X$ be a closed, compact, $C^{\infty}$ manifold. We set $X^{\triangle}=(\bar{\mathbb{R}}_{+}\times X)/(\{0\}\times X)$
as a local model interpreted as a cone with the base $X$.  Next, we denote
$X^{\wedge}=\mathbb{R}_{+}\times X$  as the corresponding open stretched cone
with the base $X$.

A $n$-dimensional manifold $B$ with conical singularities is a topological space with a finite subset $B_0=\{b_1,\cdots, b_M\}\subset B$ of
conical singularities, with the following two properties:

(1) $B\setminus B_0$ is a $C^{\infty}$ manifold.

(2) Each $b\in B_0$ has an open  neighbourhood $U$ in $B$ such that there is a homeomorphism $\phi: U\rightarrow X^{\triangle}$ for some closed
compact $C^{\infty}$ manifold $X=X(b)$, and $\phi$ restricts to a  diffeomorphism  $\phi': U\setminus\{b\} \rightarrow X^{\wedge}$.

For such a manifold, let $n \geq 2$  and $X\subset S^{n-1}$ be a bounded open set in the unit sphere of $\mathbb{R}^n_{x}$.
The set $B:=\{x \in \mathbb{R}^n\setminus\{0\}:  \frac{x}{|x|}\in X\} \cup\{0\}$ is an infinite cone with the base $X$
 and the conical point $\{0\}$. Using the polar coordinates, one can get a description of $B\setminus\{0\}$
  in the form $X^{\wedge}=\mathbb{R}^{+}\times X$, which is called the open stretched cone with the base $X$,
  and $\{0\} \times X$ is the boundary of $X^{\wedge}$.

Now, we assume that the manifold $B$ is paracompact and of dimension $n$.
By this assumption we can define the stretched manifold associated with $B$.
Let $\mathbb{B}$ be a $C^{\infty}$ manifold with compact $C^{\infty}$ boundary $\partial \mathbb{B}=\cup_{x\in B_0} X(x)$
for which there exists a diffeomorphism $B\setminus B_0=\mathbb{B}\setminus\partial \mathbb{B}:=`int \mathbb{B}$, the restriction of which
to $G_1\setminus B_0=U_1\setminus \partial \mathbb{B}$ for an open neighborhood $G_1\subset B$  near the points of $B_0$ and a collar neighborhood
$U_1\subset \mathbb{B}$ with $U_1=\cup_{x\in B_0} \{[0, 1)\times X(x)\}$.

The typical differential operators on a manifold with conical singularities, called Fuchs type, are operators that are in a neighborhood of $x_1=0$ of the following form
$$A=x_1^{-m} \sum_{k=0}^{m} a_k(x_1)(-x_1\partial_{x_1})^{k}$$
with $(x_1, x) \in X^{\wedge}$ and $a_k(x_1)\in C^{\infty} (\bar{\mathbb{R}}_{+}, \mbox{Diff}^{m-k}(X))$ (see  \cite{es,mali,sb}).
The differential $x_1\partial x_1$ in Fuchs type operators provokes us to apply the Mellin transform $M$ (see Definition \ref{d1}).
\begin{definition}\label{d1}
Let $u(t)\in C_0^{\infty}(\mathbb{R}_{+})$, $Z\in \mathbb{C}$. The Mellin transform is defined by the formula
$$Mu(z)=\int_0^{+\infty} t^zu(t)\frac{dt}{t},$$and
$$M: C_0^{\infty}(\mathbb{R}_{+})\rightarrow \mathcal{A}(C),$$
where $\mathcal{A}(C)$ denotes the spaces of entire functions.
\end{definition}

\begin{proposition}\label{p1}
 The Mellin transform satisfies the following identities
 \begin{itemize} \addtolength{\itemsep}{-1.5 em} \setlength{\itemsep}{-5pt}
   \item [\emph{(1)}] $M((-t\partial_t)u)(z)=zM(z)$,
   \item  [\emph{(2)}] $M(t^{-p}u)(z)=(Mu)(z-p)$,
   \item [\emph{(3)}] $M((\log t)u)(z)=(\partial_{z}Mu)(z)$,
   \item [\emph{(4)}] $M(u(t^{\beta}))(z)=\beta^{-1}(Mu)(\beta^{-1}z)$,
 \end{itemize}
 for $t\in \mathbb{R}_{+},$ $z,p\in \mathbb{C}$, $\beta\in \mathbb{R}\setminus \{0\}$, and $u\in C_0^{\infty}(\mathbb{R}_{+})$.
\end{proposition}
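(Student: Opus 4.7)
The plan is to verify each of the four identities by a direct computation starting from the defining formula $Mu(z)=\int_0^{+\infty} t^{z}u(t)\,\frac{dt}{t}$, using the fact that $u\in C_0^{\infty}(\mathbb{R}_+)$ to freely interchange differentiation and integration, to integrate by parts without boundary contributions, and to carry out substitutions without worrying about convergence at the endpoints.

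For identity (1), I would rewrite $M((-t\partial_t)u)(z)=-\int_0^{+\infty} t^{z}\,u'(t)\,dt$ and integrate by parts: since $u$ has compact support inside $(0,+\infty)$ the boundary term $[t^{z}u(t)]_0^{+\infty}$ vanishes, leaving $z\int_0^{+\infty} t^{z-1}u(t)\,dt=z\,Mu(z)$ (the statement as printed seems to have a typographical $M(z)$ where $Mu(z)$ is meant). For identity (2), the proof is a one-line change of exponent inside the integrand: $M(t^{-p}u)(z)=\int_0^{+\infty} t^{z-p}u(t)\,\frac{dt}{t}=Mu(z-p)$. For identity (3), I would differentiate under the integral sign; compact support of $u$ furnishes a dominating function on every compact subset of the $z$-plane, which justifies $\partial_z Mu(z)=\int_0^{+\infty}(\log t)\,t^{z}u(t)\,\frac{dt}{t}=M((\log t)u)(z)$.

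For identity (4), I would use the substitution $s=t^{\beta}$, so that $t=s^{1/\beta}$, $t^{z}=s^{z/\beta}$ and $\frac{dt}{t}=\beta^{-1}\frac{ds}{s}$, giving $M(u(t^{\beta}))(z)=\beta^{-1}\int_0^{+\infty} s^{z/\beta}u(s)\,\frac{ds}{s}=\beta^{-1}Mu(\beta^{-1}z)$. The only subtlety here is that for $\beta<0$ the substitution reverses the limits of integration, producing a minus sign that is absorbed into the $\beta^{-1}$ factor, so the formula is valid for every $\beta\in\mathbb{R}\setminus\{0\}$.

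None of these steps poses a genuine obstacle: each identity reduces to a short manipulation that is made rigorous by the compact support of $u$, and all four resulting expressions are entire functions of $z$ consistent with the mapping property $M:C_0^{\infty}(\mathbb{R}_+)\to\mathcal{A}(\mathbb{C})$ stated in Definition \ref{d1}. The mildest care is required in (1) to check the vanishing of the boundary term and in (4) to track the sign convention when $\beta<0$; both are routine.
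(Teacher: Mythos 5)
The paper states Proposition \ref{p1} without giving any proof (it is quoted as a standard fact about the Mellin transform), so there is no argument in the text to compare against; your direct verification from Definition \ref{d1} is the standard route and is essentially what any proof would consist of. Items (1)--(3) are fine as you present them: for (1) the integration by parts has no boundary contribution because $u$ has compact support in $(0,\infty)$, and you correctly read $zM(z)$ as a typo for $zMu(z)$; (2) is the immediate shift of exponent; (3) is differentiation under the integral sign, legitimate since $t\mapsto(\log t)t^{z}u(t)$ is dominated uniformly for $z$ in compact sets, again by compact support.

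The one place your argument does not deliver what you claim is (4) with $\beta<0$. With $s=t^{\beta}$ you have $\frac{dt}{t}=\beta^{-1}\frac{ds}{s}$, and for $\beta<0$ the limits of integration reverse; the resulting minus sign combines with the negative factor $\beta^{-1}$ to give
\begin{equation*}
M\bigl(u(t^{\beta})\bigr)(z)=|\beta|^{-1}\,(Mu)\bigl(\beta^{-1}z\bigr),
\end{equation*}
not $\beta^{-1}(Mu)(\beta^{-1}z)$. The sign is not ``absorbed'' in a way that restores the printed constant; it converts $\beta^{-1}$ into $|\beta|^{-1}$. So the identity as printed holds verbatim only for $\beta>0$, and for general $\beta\in\mathbb{R}\setminus\{0\}$ the correct constant is $|\beta|^{-1}$, which is the form given in standard references. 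The imprecision is thus really in the statement rather than in your computation, but your concluding sentence that the formula as written ``is valid for every $\beta\in\mathbb{R}\setminus\{0\}$'' should be corrected to reflect this.
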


Now let $\Gamma_{\beta}=\{z\in \mathbb{C}: \mbox{Re}\  z=\beta\}$. We define the weighted Mellin transform with weight data $\gamma$
as follows
$$M_{\gamma}u:=Mu|_{\Gamma_{\frac{1}{2}-\gamma}}=\int_0^{\infty} t^{\frac{1}{2}-\gamma+i\tau}u(t)\frac{dt}{t},$$
and the inverse weighted Mellin transform is defined as

$$(M_{\gamma}^{-1}g)(t)=\frac{1}{2\pi i}\int_{\gamma_{\frac{1}{2}-\gamma}}t^{-z}g(z)dz.$$
For $u(t)\in C_0^{\infty}(\mathbb{R}_{+})$, set $S_{\gamma}u(r)=e^{-\left(\frac{1}{2}-\gamma\right)r}u(e^{-r})$, then we obtain
\begin{equation}\label{e05}
  (M_{\gamma}u)\left(\frac{1}{2}-\gamma+i\tau\right)=(\mathcal{F}S_{\gamma u}(\tau)),
\end{equation}
where $\mathcal{F}$ is the 1-dimensional Fourier transform corresponding to $t$. Indeed, by changing variables $t=e^{-r}$ and set $z=\frac{1}{2}-\gamma+i\tau\in \mathbb{C}$, it is easy to see that
\begin{align*}
   (\mathcal{F}S_{\gamma}u)(\tau)&=\int_{-\infty}^{+\infty} e^{-ir\tau} e^{-\left(\frac{1}{2}-\gamma\right)r}u(e^{-r})dr=\int_{-\infty}^{+\infty} e^{-\left(\frac{1}{2}-\gamma+-i\tau\right)r}u(e^{-r})dr \\
   & =\int_{0}^{+\infty} t^{z}u(t)\frac{dt}{t}=(M_{\gamma}u)\left(\frac{1}{2}-\gamma+i\tau\right).
\end{align*}

Then, we  have the following result.

\begin{lemma}[\cite{sb}]\label{l1}
The operator $M_{\gamma}: C_0^{\infty}(\mathbb{R}_{+})\rightarrow S(\Gamma_{\frac{1}{2}-\gamma})$ extends by continuity
to an isomorphism
$$M_{\gamma}: L_2^{\gamma}(\mathbb{R}_{+})\rightarrow L^2(\Gamma_{\frac{1}{2}-\gamma})$$
for all $\gamma\in \mathbb{R}$ and $ L_2^{\gamma}(\mathbb{R}_{+})=t^{\gamma}  L_2(\mathbb{R}_{+})$, where
$$|u|_{ L_2^{\gamma}(\mathbb{R}_{+})}=(2\pi)^{-\frac{1}{2}}|M_{\gamma}u|_{L^2\left(\Gamma_{\frac{1}{2}-\gamma}\right)}.$$
\end{lemma}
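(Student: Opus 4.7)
The plan is to reduce the lemma to the Plancherel theorem for the Fourier transform via the factorization already highlighted in equation \eqref{e05}. Concretely, I would exhibit $M_\gamma$ as the composition $\mathcal{F}\circ S_\gamma$, where $S_\gamma u(r)=e^{-(1/2-\gamma)r}u(e^{-r})$, and show that each factor is an isomorphism between the appropriate Hilbert spaces.

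First I would verify that $S_\gamma$ is an isometric isomorphism from $L_2^\gamma(\mathbb{R}_+)$ onto $L^2(\mathbb{R})$. With the substitution $t=e^{-r}$, $\tfrac{dt}{t}=-dr$, one computes
\begin{equation*}
\int_{-\infty}^{+\infty}|S_\gamma u(r)|^2\,dr=\int_{-\infty}^{+\infty}e^{-(1-2\gamma)r}|u(e^{-r})|^2\,dr=\int_0^{+\infty}t^{-2\gamma}|u(t)|^2\,dt,
\end{equation*}
which identifies the $L^2(\mathbb{R})$-norm of $S_\gamma u$ with the defining weighted norm of $L_2^\gamma(\mathbb{R}_+)$. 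The inverse of $S_\gamma$ is given explicitly by $(S_\gamma^{-1}g)(t)=t^{-(1/2-\gamma)}g(-\log t)$, so bijectivity is immediate. Next I would invoke the classical Plancherel theorem, which gives an isomorphism $\mathcal{F}:L^2(\mathbb{R})\to L^2(\mathbb{R})$ with $\|\mathcal{F}v\|_{L^2}=(2\pi)^{1/2}\|v\|_{L^2}$ in the convention of Definition \ref{d1}.

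Having these pieces, I would use the identity \eqref{e05}, written intrinsically as $M_\gamma u=\mathcal{F}S_\gamma u$ after identifying $\Gamma_{1/2-\gamma}$ with $\mathbb{R}$ via $z=\tfrac{1}{2}-\gamma+i\tau$, to define $M_\gamma$ on all of $L_2^\gamma(\mathbb{R}_+)$ by continuous extension from $C_0^\infty(\mathbb{R}_+)$; density of $C_0^\infty(\mathbb{R}_+)$ in $L_2^\gamma(\mathbb{R}_+)$ follows from the corresponding density of $C_0^\infty(\mathbb{R})$ in $L^2(\mathbb{R})$ transported back by $S_\gamma^{-1}$. Composing the two isomorphisms yields that $M_\gamma:L_2^\gamma(\mathbb{R}_+)\to L^2(\Gamma_{1/2-\gamma})$ is an isomorphism, and chaining the two norm identities produces the Parseval-type relation
\begin{equation*}
|u|_{L_2^\gamma(\mathbb{R}_+)}=\|S_\gamma u\|_{L^2(\mathbb{R})}=(2\pi)^{-1/2}\|\mathcal{F}S_\gamma u\|_{L^2(\mathbb{R})}=(2\pi)^{-1/2}|M_\gamma u|_{L^2(\Gamma_{1/2-\gamma})}.
\end{equation*}

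The main subtlety, rather than a real obstacle, is bookkeeping: pinning down the exact Fourier convention so that the constant $(2\pi)^{-1/2}$ emerges as stated, correctly identifying the natural measure $d\tau$ on the vertical line $\Gamma_{1/2-\gamma}$ induced by the parametrization $z=\tfrac{1}{2}-\gamma+i\tau$, and ensuring that the weighted $L_2^\gamma$-norm is the one compatible with the substitution performed above. Once these conventions are fixed consistently, the proof is essentially a change of variables followed by Plancherel, and no deeper analysis is required.
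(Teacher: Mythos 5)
Your proof is correct and follows essentially the same route the paper indicates: the factorization $M_\gamma u=\mathcal{F}S_\gamma u$ is exactly the identity \eqref{e05} verified just before the lemma, and the rest is the change of variables $t=e^{-r}$ plus Plancherel, which is the standard argument behind the cited result in \cite{sb}. The norm computation, the explicit inverse of $S_\gamma$, and the density transport are all accurate, so no gap remains.
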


\begin{definition}\label{d2}
For $s,\gamma\in \mathbb{R}$, we denote by $\mathcal{H}_2^{s,\gamma}(\mathbb{R}_{+}^{n+1})$ the space of all $u\in \mathcal{D}'(\mathbb{R}_{+}^{n+1})$ such that
$$\frac{1}{2\pi i}\int_{\Gamma_{\frac{n+1}{2}-\gamma}}\int_{\mathbb{R}^n}(1+|z|^2+|\xi|^2)^s\left|\left(M_{\gamma-\frac{n+1}{2},t\rightarrow z}\mathcal{F}_{x\rightarrow\xi}u\right)(z,\xi)\right|^2dzd\xi<+\infty,$$
where $M_{\gamma-\frac{n+1}{2}}$ is the weighted Mellin transform and $\mathcal{F}_{x\rightarrow\xi}$ the $n$-dimensional Fourier transform. Naturally, the
space $\mathcal{H}_2^{s,\gamma}(\mathbb{R}_{+}^{n+1})$ admits a norm
$$\|u\|_{\mathcal{H}_2^{s,\gamma}(\mathbb{R}_{+}^{n+1})}=\left[\frac{1}{2\pi i}\int_{\Gamma_{\frac{n+1}{2}-\gamma}}\int_{\mathbb{R}^n}(1+|z|^2+|\xi|^2)^s\left|\left(M_{\gamma-\frac{n+1}{2},t\rightarrow z}\mathcal{F}_{x\rightarrow\xi}u\right)(z,\xi)\right|^2dzd\xi\right]^{\frac{1}{2}}.$$
\end{definition}
Then we easily obtain  the weighted Mellin Sobolev space of integer smoothness. 

\begin{definition}\label{d3}
Let $L_2(\mathbb{R}_{+}^{n+1})$ be the space of square integrable functions on $\mathbb{R}_{+}^{n+1}$, with respect to $dtdx$, and
 $(t,x)\in \mathbb{R}_{+}\times \mathbb{R}^n$. For $m\in \mathbb{N}$, and $\gamma \in \mathbb{R}$, we define
 \begin{equation}\label{e01}
   \mathcal{H}_2^{m,\gamma}(\mathbb{R}_{+}^{n+1})=\left\{u\in D'\left(\mathbb{R}_{+}^{n+1}\right): (t\partial_t)^{\alpha}\partial_x^{\beta}u\in t^{\gamma-\frac{n}{2}}L^2\left(\mathbb{R}_{+}^{n+1},dtdx\right)\right\},
 \end{equation}
 for arbitrary  $\alpha\in \mathbb{N}, \beta \in  \mathbb{N}^n$, and $|\alpha|+|\beta|\leq m$. Then $\mathcal{H}_2^{m,\gamma}(\mathbb{R}_{+}^{n+1})$
 is a Hilbert  space with the norm
 $$\|u\|_{\mathcal{H}_2^{m,\gamma}(\mathbb{R}_{+}^{n+1})}=\sum_{|\alpha|+|\beta|\leq m}\left[\int_{\mathbb{R}_{+}\times \mathbb{R}^n}
 | t^{\frac{n}{2}-\gamma}(t\partial_t)^{\alpha}\partial_x^{\beta}u(t,x)|^2 dtdx\right]^{\frac{1}{2}}.$$
\end{definition}
If we denote by $L_2(\mathbb{R}_{+}^{n+1})$ the space of square integrable functions with respect to the
measure $\frac{dt}{t} dx$, we can write (\ref{e01}) as follows:
 \begin{equation}\label{e03}
   \mathcal{H}_2^{m,\gamma}(\mathbb{R}_{+}^{n+1})=\left\{u\in D'\left(\mathbb{R}_{+}^{n+1}\right): t^{\frac{n+1}{2}-\gamma}(t\partial_t)^{\alpha}\partial_x^{\beta}u\in L_2\left(\mathbb{R}_{+}^{n+1},\frac{dt}{t}dx\right)\right\},
 \end{equation}
for all  $\alpha\in \mathbb{N}, \beta \in  \mathbb{N}^n$, and $|\alpha|+|\beta|\leq m$.
Here $m \in \mathbb{N}$ is called the smoothness of
Sobolev spaces, and $\gamma \in \mathbb{R}$ the flatness of $t$-variable.
Next, we introduce a map
\begin{equation}\label{e04}
  \left(S_{\frac{n+1}{2},\gamma}u\right)(r,x)=e^{-\left(\frac{n+1}{2}-\gamma\right)r}u(e^{-r},x)
\end{equation}
for $u(t, x) \in C_0^{\infty}(\mathbb{R}^{n+1}_{+})$, which is a continuous map $S_{\frac{n+1}{2},\gamma}: C_0^{\infty}(\mathbb{R}^{n+1}_{+})\rightarrow C_0^{\infty}(\mathbb{R}^{n+1})$.

Similar to (\ref{e05}), we can extend (\ref{e04}) to an isomorphism
$$\left(S_{\frac{n+1}{2},\gamma}\right): \mathcal{H}_2^{m,\gamma}(\mathbb{R}_{+}^{n+1})\rightarrow H_2^{m}(\mathbb{R}^{n+1}),$$
this isomorphism can also be said
the norm  $\|u\|_{\mathcal{H}_2^{m,\gamma}(\mathbb{R}_{+}^{n+1})}$ is equivalent to the norm $\|S_{\frac{n+1}{2},\gamma} u\|_{ H_2^{m}(\mathbb{R}^{n+1})}$,
where $H_2^{m}(\mathbb{R}^{n+1})$ denotes the distribution space for $(r, x)\in \mathbb{R}^{n+1}$ such that
 \begin{equation}\label{e06}
   H_2^{m,\gamma}(\mathbb{R}_{+}^{n+1})=\left\{v(r,x)\in D'\left(\mathbb{R}_{+}^{n+1}\right): \partial_r^{\alpha}\partial_x^{\beta}v(r,x)\in L^2\left(\mathbb{R}^{n+1},drdx\right)\right\},
 \end{equation}
for all  $\alpha\in \mathbb{N}, \beta\in \mathbb{N}^n$  and $|\alpha|+|\beta|\leq m$. The readers can be referred  more details and information on Fuchs
type operators and the weighted Mellin Sobolev spaces in \cite{es,sb}.

The space  $\mathcal{H}_2^{m,\gamma}(\mathbb{R}_{+}^{n+1})$ can be extended to more general cases  $ \mathcal{H}_p^{m,\gamma}(\mathbb{R}_{+}^{n+1})$
for $1\leq p<+\infty$ and $\mathcal{H}_p^{m,\gamma}(\mathbb{B})$ (the cone Sobolev spaces on manifolds with conical singularities).

\begin{definition}\label{d4}
 For $(t,x)\in  \mathbb{R}_{+}\times \mathbb{R}^n$, we say that $u(t,x)\in L_p\left(\mathbb{R}_{+}^{n+1},\frac{dt}{t}dx\right)$ if
  $$|u|_{L_p}=\left[\int_{\mathbb{R}_{+}\times \mathbb{R}^n}t^{n+1}|u(t,x)|^p \frac{dt}{t}dx\right]^{\frac{1}{p}}<+\infty.$$
Furthermore, the weighted $L_p$-spaces with weight data $\gamma\in \mathbb{R}$ is denoted by $L^{\gamma}_p\left(\mathbb{R}_{+}^{n+1},\frac{dt}{t}dx\right)$,
that is, if $u(t,x)\in L^{\gamma}_p\left(\mathbb{R}_{+}^{n+1},\frac{dt}{t}dx\right)$, then $t^{-\gamma}u(t,x)\in L_p\left(\mathbb{R}_{+}^{n+1},\frac{dt}{t}dx\right)$, and
  $$|u|_{L^{\gamma}_p}=\left[\int_{\mathbb{R}_{+}\times \mathbb{R}^n}t^{n+1}|t^{-\gamma}u(t,x)|^p \frac{dt}{t}dx\right]^{\frac{1}{p}}<+\infty.$$
\end{definition}

The weighted Sobolev space for all $1\leq p < +\infty$ can be defined as

\begin{definition}\label{d5}
 For $m\in \mathbb{N}$, the spaces
  \begin{equation}\label{e07}
   \mathcal{H}_p^{m,\gamma}(\mathbb{R}_{+}^{n+1})=\left\{u\in D'\left(\mathbb{R}_{+}^{n+1}\right): t^{\frac{n+1}{p}-\gamma}(t\partial_t)^{\alpha}\partial_x^{\beta}u\in L_p\left(\mathbb{R}_{+}^{n+1},\frac{dt}{t}dx\right)\right\},
 \end{equation}
 for all  $\alpha\in \mathbb{N}, \beta\in \mathbb{N}^n$  and $|\alpha|+|\beta|\leq m$.  In other words, if $u(t,x)\in \mathcal{H}_p^{m,\gamma}(\mathbb{R}_{+}^{n+1})$, then $(t\partial_t)^{\alpha}\partial_x^{\beta}u\in L_p^{\gamma}\left(\mathbb{R}_{+}^{n+1},\frac{dt}{t}dx\right)$.
\end{definition}

It is easy to see that  $\mathcal{H}_p^{m,\gamma}(\mathbb{R}_{+}^{n+1})$ is a Banach space with norm
$$\|u\|_{\mathcal{H}_p^{m,\gamma}(\mathbb{R}_{+}^{n+1})}=\sum_{|\alpha|+|\beta|\leq m}\left[\int_{\mathbb{R}_{+}\times \mathbb{R}^n}
t^{n+1} | t^{-\gamma}(t\partial_t)^{\alpha}\partial_x^{\beta}u(t,x)|^p \frac{dt}{t}dx\right]^{\frac{1}{p}}.$$

Similarly (see \cite{css}),  the weighted Sobolev spaces  $\mathcal{H}_p^{m,\gamma}(X^{\wedge})$ with $1\leq p<\infty$
 can be defined on manifolds with conical singularities. Let $X$ be a closed compact $C^{\infty}$ manifold, and $U =
\{U_1,\cdots,U_N \}$ an open covering of $X$ by coordinate neighborhoods. If we fix a subordinate
partition of unity $\{\phi_1,\cdots, \phi_N \}$ and charts $\chi_j: U_j\rightarrow \mathbb{R}^n, j=1,2,\cdots,N$, then $u\in \mathcal{H}_p^{m,\gamma}(X^{\wedge})$ if and only if $u\in D'(X^{\wedge})$ with the norm
$$\|u\|_{\mathcal{H}_p^{m,\gamma}(X^{\wedge})}=\left[\sum_{j=1}^N\left\|(1\times
\chi_j^{\ast})^{-1}\phi_ju\right\|^p_{\mathcal{H}_p^{m,\gamma}(\mathbb{R}_{+}^{n+1})}\right]^{\frac{1}{p}}<+\infty,$$
where $1\times\chi_j^{\ast}: C_0^{\infty}(\mathbb{R}_{+}\times \mathbb{R}^n)\rightarrow C_0^{\infty}(\mathbb{R}_{+}\times U_j)$
is the pull-back function with respect to
$1\times\chi_j: \mathbb{R}_{+}\times U_j\rightarrow \mathbb{R}_{+}\times \mathbb{R}^n$.
We denote  the closure of $C_0^{\infty}(X^{\wedge})$ with respect to the norm  $\|\cdot\|_{\mathcal{H}_p^{m,\gamma}(X^{\wedge})}$
 by $\mathcal{H}_{p,0}^{m,\gamma}(X^{\wedge})$.

\begin{lemma}[See\cite{ss}]\label{l2}
 For all $m\in \mathbb{N}$, $\gamma\in \mathbb{R}$, we have $\mathcal{H}_p^{m,\gamma}(X^{\wedge})\subset W_{\text{loc}}^{m,p}(X^{\wedge})$,
 where  $W_{\text{loc}}^{m,p}(X^{\wedge})$ denotes the subspace of all $u\in D'(X^{\wedge})$ such that $\phi u\in W^{m,p}(X^{\wedge})$
 for each $\phi \in C_0^{\infty}(X^{\wedge})$.
\end{lemma}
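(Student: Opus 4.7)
The plan is to reduce the assertion to a purely local statement on $\mathbb{R}_{+}^{n+1}$ and then exploit the fact that any $\phi\in C_0^{\infty}(X^{\wedge})$ has support bounded away from the conical vertex $t=0$, so that all the $t$-weights that distinguish the cone Sobolev norm from the usual Sobolev norm become harmless multiplicative constants.

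First, I would use the partition-of-unity definition of $\mathcal{H}_p^{m,\gamma}(X^{\wedge})$ together with the partition-of-unity characterisation of $W_{\text{loc}}^{m,p}(X^{\wedge})$ to reduce to the following model statement: if $u\in \mathcal{H}_p^{m,\gamma}(\mathbb{R}_{+}^{n+1})$ and $\phi\in C_0^{\infty}(\mathbb{R}_{+}^{n+1})$, then $\phi u\in W^{m,p}(\mathbb{R}_{+}^{n+1})$. Pick $0<a<b<\infty$ and a compact $K\subset\mathbb{R}^n$ with $\mathrm{supp}(\phi)\subset[a,b]\times K$. On this set, $t$ and $t^{-1}$ are bounded, so the weights $t^{(n+1)/p-\gamma}$ and $t^{-\gamma}$ and the Jacobian factor relating $\frac{dt}{t}dx$ to $dtdx$ are all pinched between positive constants.

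Next I would convert between the Fuchs-type derivatives $(t\partial_t)^{\alpha}\partial_x^{\beta}$ and the ordinary derivatives $\partial_t^{\alpha}\partial_x^{\beta}$. The identity $(t\partial_t)^{k}=\sum_{j=1}^{k} S(k,j)\,t^{j}\partial_{t}^{j}$ (with $S(k,j)$ the Stirling numbers of the second kind) and its inverse $t^{k}\partial_{t}^{k}=\sum_{j=1}^{k} s(k,j)(t\partial_{t})^{j}$ show that, on $[a,b]$, each $\partial_t^{\alpha}$ is a $C^{\infty}([a,b])$-linear combination of the $(t\partial_t)^{j}$ for $j\le\alpha$, and vice versa. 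Consequently, for any multi-indices with $|\alpha|+|\beta|\le m$,
\begin{equation*}
\int_{[a,b]\times K}\bigl|\partial_t^{\alpha}\partial_x^{\beta}u(t,x)\bigr|^{p}\,dt\,dx
\;\le\;C(a,b,\phi,\gamma)\sum_{|\alpha'|+|\beta|\le m}\int_{\mathbb{R}_{+}\times\mathbb{R}^n} t^{n+1}\bigl|t^{-\gamma}(t\partial_{t})^{\alpha'}\partial_x^{\beta}u\bigr|^{p}\frac{dt}{t}dx,
\end{equation*}
so the ordinary derivatives of $u$ up to order $m$ lie in $L^{p}$ on $\mathrm{supp}(\phi)$.

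Finally, I would invoke the Leibniz rule to expand $\partial_t^{\alpha}\partial_x^{\beta}(\phi u)=\sum c_{\alpha',\beta'}\,\partial_t^{\alpha-\alpha'}\partial_x^{\beta-\beta'}\phi\cdot\partial_t^{\alpha'}\partial_x^{\beta'}u$; each term is supported in $[a,b]\times K$, the factor in $\phi$ is a bounded smooth function, and the factor in $u$ was just shown to be $L^{p}$. Summing yields $\phi u\in W^{m,p}(\mathbb{R}_{+}^{n+1})$ with a bound controlled by $\|u\|_{\mathcal{H}_p^{m,\gamma}}$. There is no serious obstacle here precisely because $\phi$ kills a neighbourhood of the conical point; the only thing that requires care is bookkeeping of the algebraic conversion between $(t\partial_t)^{\alpha}$ and $\partial_t^{\alpha}$, which on a set where $t$ is bounded above and below costs only a multiplicative constant.
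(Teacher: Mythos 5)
Your argument is correct, and it is worth noting that the paper itself offers no proof of this lemma: it is quoted directly from the reference of Schrohe and Seiler, so there is nothing in the text to compare against except the citation. Your localization argument is exactly the standard reasoning behind the cited result: since any $\phi\in C_0^{\infty}(X^{\wedge})$ has support with $t$ confined to a compact interval $[a,b]\subset(0,\infty)$, the weights $t^{\frac{n+1}{p}-\gamma}$, the factor relating $\frac{dt}{t}dx$ to $dt\,dx$, and the coefficients in the Stirling-number conversion between $(t\partial_t)^{k}$ and $t^{k}\partial_t^{k}$ are all bounded above and below, so the cone Sobolev norm controls the ordinary $W^{m,p}$ norm on $\mathrm{supp}(\phi)$, and the Leibniz rule finishes the job. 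The only bookkeeping you leave implicit, and should state explicitly for completeness, is the chart step: the norm on $\mathcal{H}_p^{m,\gamma}(X^{\wedge})$ is defined through the localizations $(1\times\chi_j^{\ast})^{-1}\phi_j u$ in $\mathcal{H}_p^{m,\gamma}(\mathbb{R}_{+}^{n+1})$, while membership in $W^{m,p}_{\text{loc}}(X^{\wedge})$ is likewise checked in charts, so one needs the (standard) invariance of both spaces under the diffeomorphisms $1\times\chi_j$ and multiplication by the cut-offs $\phi_j$ to glue the model-case estimate back onto $X^{\wedge}$; with that remark added, your proof is complete.
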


Let $\mathbb{B}$ be the stretched manifold of $B$, we will always denote $\omega(t) \in  C^{\infty}(\mathbb{B})$
 as a real-valued cut-off function which equals 1 near $\{0\} \times \partial B$.

\begin{definition}\label{d6}
Let $\mathbb{B}$ be the stretched manifold to a manifold B with conical singularities.
Then $\mathcal{H}_p^{m,\gamma}(\mathbb{B})$ for $m \in N, \gamma \in  \mathbb{R}$ denotes the subspace of all $u \in W^{m,p}_{\emph{loc}}(\mbox{int} (\mathbb{B}))$
 such that
  $$\mathcal{H}_p^{m,\gamma}(\mathbb{B})=\{u\in W^{m,p}_{\text{loc}}(\mbox{int} (\mathbb{B})): \omega u\in \mathcal{H}_p^{m,\gamma}(X^{\wedge}) \}$$
for any cut-off function $\omega$, supported by a collar neighbourhood of $[0, 1) \times \partial \mathbb{B}$, where $\mbox{int} (\mathbb{B})=\mathbb{B}\setminus \partial \mathbb{B}$. Moreover,
the subspace $\mathcal{H}_p^{m,\gamma}(\mathbb{B})$ is defined as follows:
 $$\mathcal{H}_p^{m,\gamma}(\mathbb{B})=[\omega] \mathcal{H}_{p,0}^{m,\gamma}(X^{\wedge}) +[1-\omega] W^{m,p}_{0}(\mbox{int} (\mathbb{B}))$$
where $W^{m,p}_{0}(\mbox{int} (\mathbb{B}))$ denotes the closure of $C^{\infty}_0 (\mbox{int} (\mathbb{B}))$ in Sobolev spaces $W^{m,p}(\tilde{X})$ when $\tilde{X}$ is
a closed compact $C^{\infty}$ manifold of dimension n + 1 that contains $\mathbb{B}$ as a submanifold with
boundary.
\end{definition}

\begin{lemma}[See\cite{ss}]\label{l3}
We have the following properties:
\begin{itemize} \addtolength{\itemsep}{-1.5 em} \setlength{\itemsep}{-5pt}
   \item [\emph{(1)}]  $\mathcal{H}_p^{m,\gamma}(\mathbb{B})$ is Banach space for $1\leq p<+\infty$, and is Hilbert space for $p = 2$.
    \item [\emph{(2)}] $L^{\gamma}_p(\mathbb{B})=\mathcal{H}_p^{0,\gamma}(\mathbb{B})$.
    \item [\emph{(3)}] $L^{\gamma}_p(\mathbb{B})=\mathcal{H}_p^{0,0}(\mathbb{B})$.
     \item [\emph{(4)}] $t^{\gamma_1}\mathcal{H}_p^{m,\gamma_2}(\mathbb{B})=\mathcal{H}_p^{m,\gamma_1+\gamma_2}(\mathbb{B})$.
     \item [\emph{(5)}]  The embedding  $\mathcal{H}_p^{m,\gamma}(\mathbb{B})\hookrightarrow \mathcal{H}_p^{m',\gamma'}(\mathbb{B})$ is continuous if
$m\geq m'$, $\gamma\geq \gamma'$ and is compact if $m> m'$, $\gamma> \gamma'$.
\end{itemize}
\end{lemma}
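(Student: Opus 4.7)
The plan is to treat each of the five items separately, but with a unified strategy: every claim about $\mathcal{H}_p^{m,\gamma}(\mathbb{B})$ is reduced, via the decomposition in Definition~\ref{d6}, to a claim about $\mathcal{H}_{p,0}^{m,\gamma}(X^{\wedge})$ near the conical end plus a claim about $W_0^{m,p}$ in the smooth interior. The cone end is transferred, via the isomorphism $S_{\frac{n+1}{2},\gamma}$ from \eqref{e04}, to a statement about an ordinary Sobolev space on $\mathbb{R}^{n+1}$, where classical theory applies, and the two parts are glued by the cut-off $\omega$.

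For item (1), I take a Cauchy sequence $\{u_k\}$ in $\mathcal{H}_p^{m,\gamma}(\mathbb{B})$, observe that every weak derivative $(t\partial_t)^{\alpha}\partial_x^{\beta}u_k$ is Cauchy in the weighted $L_p^{\gamma}$-space from Definition~\ref{d4}, pass to a common limit $u$ by completeness, and verify distributionally that the limits of these derivatives are the weak derivatives of $u$; for $p=2$ the Hilbert structure comes from the obvious sesquilinear pairing. Items (2) and (3) are immediate from the definitions when $m=0$, since only $\alpha=\beta=0$ contributes and the norm collapses to $|u|_{L_p^{\gamma}}$. For (4), the algebraic identity $(t\partial_t)^{\alpha}(t^{\gamma_1}u)=t^{\gamma_1}(\gamma_1+t\partial_t)^{\alpha}u$, expanded binomially as a linear combination of $(t\partial_t)^{k}u$ with $k\leq\alpha$, shows that multiplication by $t^{\gamma_1}$ is a bounded bijection from $\mathcal{H}_p^{m,\gamma_2}(\mathbb{B})$ onto $\mathcal{H}_p^{m,\gamma_1+\gamma_2}(\mathbb{B})$ with bounded inverse $t^{-\gamma_1}$.

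The substantive item is (5). The continuous embedding for $m\geq m'$ and $\gamma\geq \gamma'$ is straightforward: near the conical point $t\in(0,1]$ one has $t^{-\gamma'}\leq t^{-\gamma}$, the sum over $|\alpha|+|\beta|\leq m'$ is dominated by that over $|\alpha|+|\beta|\leq m$, and away from the singularity the weights are bounded, so the claim reduces to the classical Sobolev inclusion. For the compact embedding with strict inequalities, I would split a bounded sequence $\{u_k\}$ by a family of cut-offs $\omega_{\varepsilon}$ supported within distance $\varepsilon$ of the singularity: on the complement of $\mathrm{supp}(\omega_\varepsilon)$ the Rellich-Kondrachov theorem on the compact smooth part of $\mathbb{B}$ yields a $W^{m',p}$-convergent subsequence; on the singular collar, the factor $t^{\gamma-\gamma'}\to 0$ as $t\to 0^{+}$ provides tightness, so that $\|\omega_{\varepsilon}u_k\|_{\mathcal{H}_p^{m',\gamma'}}\leq C\varepsilon^{\gamma-\gamma'}\|u_k\|_{\mathcal{H}_p^{m,\gamma}}$, and a diagonal argument over $\varepsilon\to 0$ produces the desired convergent subsequence.

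The main obstacle is this last compactness step: one must carefully quantify how the strict weight gap $\gamma-\gamma'>0$ compensates for the degenerate cone geometry and verify the tightness estimate uniformly in $k$. Here the transfer via $S_{\frac{n+1}{2},\gamma}$ is most useful: in the coordinate $r=-\log t$, the weight shift becomes an exponential decay in $r$, turning the problem into a Sobolev compactness statement on a half-cylinder with exponentially decaying tails at $+\infty$, which can be settled by standard concentration-compactness estimates together with Rellich-Kondrachov on bounded cylinder pieces.
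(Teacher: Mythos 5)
The paper offers no proof of this lemma at all: it is quoted from Schrohe--Seiler \cite{ss}, so there is no in-paper argument to compare against, and your sketch is essentially a reconstruction of the standard proof from that literature. Its outline is sound: completeness of the weighted $L_p$-spaces of Definition \ref{d4} gives (1); (2) is definitional for $m=0$; the identity $(t\partial_t)(t^{\gamma_1}u)=t^{\gamma_1}(\gamma_1+t\partial_t)u$, iterated and expanded binomially, gives (4); and for (5) the continuous embedding follows from $t^{-\gamma'}\leq t^{-\gamma}$ on $0<t\leq 1$ together with the trivial inclusion of derivative orders, while compactness genuinely needs both strict inequalities exactly as you arrange it: $m>m'$ feeds Rellich--Kondrachov on the part of $\mathbb{B}$ away from $\partial\mathbb{B}$, and $\gamma>\gamma'$ supplies the tightness factor $\varepsilon^{\gamma-\gamma'}$ on the collar, after which a diagonal extraction finishes. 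Two points should be tightened. First, your collar estimate $\|\omega_{\varepsilon}u_k\|_{\mathcal{H}_p^{m',\gamma'}}\leq C\varepsilon^{\gamma-\gamma'}\|u_k\|_{\mathcal{H}_p^{m,\gamma}}$ requires cut-offs whose Fuchs-type derivatives $(t\partial_t)^{j}\omega_{\varepsilon}$ are bounded uniformly in $\varepsilon$; this holds for the natural choice $\omega_{\varepsilon}(t)=\omega(t/\varepsilon)$, since $t\partial_t\omega_{\varepsilon}(t)=(t/\varepsilon)\omega'(t/\varepsilon)$, but it must be said, otherwise $C$ depends on $\varepsilon$ and the diagonal argument collapses. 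Second, item (3) is not ``immediate from the definitions'' as printed: taken literally, (2) and (3) together would force $\mathcal{H}_p^{0,\gamma}(\mathbb{B})=\mathcal{H}_p^{0,0}(\mathbb{B})$ for every $\gamma$, which is false; (3) is evidently a misprint for $L_p(\mathbb{B})=\mathcal{H}_p^{0,0}(\mathbb{B})$, i.e. (2) with $\gamma=0$, and your argument for (2) covers that corrected statement. With these caveats, your route --- decompose with the cut-off $\omega$ of Definition \ref{d6}, treat the cone end either directly in the $t$-variable or via $S_{\frac{n+1}{2},\gamma}$ on the half-cylinder where the weight gap becomes exponential decay in $r=-\log t$ --- is a legitimate self-contained substitute for the citation.
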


\section{Cone Moser-Trudinger  inequalities}
Let $P : \mathbb{R}_+ \rightarrow \mathbb{R}_+$ be an $N$-function, that is, $P$ is continuous, convex, with
$P(t)>0$ for $t>0$, $\frac{P(t)}{t}\rightarrow 0$ as $t\rightarrow0$, and $\frac{P(t)}{t}\rightarrow \infty$ as $t\rightarrow\infty$.
Equivalently, $P$ admits the representation $P(s)=\int_0^s p(\tau)d\tau$
 where $p:\mathbb{R}_+ \rightarrow \mathbb{R}_+$ is non-decreasing, right-continuous, with $p(0)=0, p(t)>0$ for $t>0$, and
$p(t)\rightarrow \infty$  as $\rightarrow \infty$.

The $N$-function $\tilde{P}$ conjugate to $P$ is defined by $\tilde{P}(t)=\int_0^t \tilde{p}(\tau)d\tau$, where
$\tilde{p}:\mathbb{R}_+ \rightarrow \mathbb{R}_+$   is given by $\tilde{p}(t)= \sup\{s :p(s)\leq t\}$ (see \cite{AF}).
It is easy to see that these $N$-functions can be extended into even functions on all $\mathbb{R}$.

The $N$-function $P$ is said to satisfy the $\Delta_2$ condition if, for some $k>0$,
\begin{equation}\label{e08}
  P(2t) \leq  kP(t), \, \,\,\forall t>0.
\end{equation}
When (\ref{e08}) holds only for $t$ at least some $t_0 >0$, then $P$ is said to satisfy the $\Delta_2$
condition near infinity.

Furthermore, the weighted Orlicz spaces with weight data $\gamma\in \mathbb{R}$ is denoted by $L^{\gamma}_P\left(\mathbb{R}_{+}^{n+1},\frac{dt}{t}dx\right)$,
that is, if $u(t,x)\in L^{\gamma}_P\left(\mathbb{R}_{+}^{n+1},\frac{dt}{t}dx\right)$, then $t^{-\gamma}u(t,x)\in L_p\left(\mathbb{R}_{+}^{n+1},\frac{dt}{t}dx\right)$, and
  $$\left[\int_{\mathbb{R}_{+}}\int_{ \mathbb{R}^n}t^{n+1}P\left(|t^{-\gamma}u(t,x)|\right) \frac{dt}{t}dx\right]<+\infty.$$
We easily know that $L^{\gamma}_P\left(\mathbb{R}_{+}^{n+1},\frac{dt}{t}dx\right)$ is a Banach space under the norm

$$|u|_{L^{\gamma}_P}=\inf\left\{\lambda>0: \int_{\mathbb{R}_{+}}\int_{ \mathbb{R}^n}t^{n+1}P\left(\frac{|t^{-\gamma}u(t,x)|}{\lambda}\right) \frac{dt}{t}dx<+\infty\right\}.$$
 $\mathcal{L}^{\gamma}_P\left(\mathbb{B},\frac{dt}{t}dx\right)$ is a convex subset of $L^{\gamma}_P\left(\mathbb{B},\frac{dt}{t}dx\right)$.

The closure in $L^{\gamma}_P\left(\mathbb{B},\frac{dt}{t}dx\right)$ of the set of bounded measurable functions with compact
support in $\mathbb{B}$ is denoted by $E_P (\mathbb{B})$. The equality $E_P (\mathbb{B})=L^{\gamma}_P\left(\mathbb{B},\frac{dt}{t}dx\right)$ holds if and only
if $P(s)$ satisfies the $\Delta_2$ condition, for all $s$ or for $s$ large according to whether $\mathbb{B}$ has
infinite measure or not. The dual of $E_P (\mathbb{B})$ can be identified with $L_{\tilde{P}}\left(\mathbb{B},\frac{dt}{t}dx\right)$ by means of the pairing
$\int_{\mathbb{B}} u(x)v(x) \frac{dt}{t}dx$, and the dual norm on $L_{\tilde{P}}\left(\mathbb{B},\frac{dt}{t}dx\right)$  is equivalent to $|\cdot|_{L^{\gamma}_{\tilde{P}}}$.

The space $L_{\tilde{P}}\left(\mathbb{B},\frac{dt}{t}dx\right)$ is reflexive if and only if $P$ and $\tilde{P}$ satisfy the $\Delta_2$ condition
(near infinity only if $\mathbb{B}$ has finite measure). $P\ll M$ means that $P$ grows essentially less rapidly than $M$, that is, for each $\varepsilon >0$,
$P(t)/(M(\varepsilon t))\rightarrow 0$ as $t\rightarrow \infty$. This is the case if and only if $M^{-1}(t)/P^{-1}(t)\rightarrow 0$ as
$t\rightarrow \infty$. Therefore, we have the continuous imbedding $L_M (B)\subset E_P (B)$
when $\mathbb{B}$ has finite measure.

Let $B$ be a n-dimensional compact manifold with conical singularity at the point $b \in \partial B$, and
$\mathbb{B}$ be the stretched manifold of $B$, i.e. without loss of generality, we suppose $\mathbb{B}=[0, 1)\times (-1,1)$, $X$
is a closed compact manifold of dimension $n - 1$, $\partial \mathbb{B} = {0} \times X$.

Next we denote $|\cdot|_{L_p^1}$,  $\|\cdot\|_{\mathcal{H}_p^{m,1}(\mathbb{B})}$
 by  $|\cdot|_{p}$,  $\|\cdot\|_{m,p}$, respectively.

\begin{theorem}\label{t0}
  Let $\mathbb{B}\in \mathbb{R}_{+}^{2}$, $u\in \mathcal{H}_2^{1,1}(\mathbb{B})$ and $|\nabla_{\mathbb{B}} u|_2^2\leq 1$.
Then there exists a constant $C>0$  such that
$$\int_{\mathbb{B}} e^{\alpha u^2}\frac{dx_1}{x_1}dx_2 \leq C (\mathbb{B}),$$
where $\alpha \leq \alpha_2=2\omega_1=4\pi$, $\nabla_{\mathbb{B}}=(x_1\partial_{x_1},\partial_{x_2})$,  and
$\omega_1=2\pi$ is  the perimeter of the unit sphere.

\end{theorem}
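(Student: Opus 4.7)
My plan is to unfold the cone structure via the exponential substitution $x_1 = e^{-r}$, thereby reducing the cone Moser--Trudinger inequality to the classical sharp Moser--Trudinger inequality on a two-dimensional Euclidean strip. For $u \in \mathcal{H}_2^{1,1}(\mathbb{B})$ on $\mathbb{B} = [0,1) \times (-1,1)$, I set $v(r,x_2) := u(e^{-r},x_2)$ on the strip $\Omega := (0,\infty) \times (-1,1)$. A direct computation gives $x_1\,\partial_{x_1} u = -\partial_r v$, $\partial_{x_2} u = \partial_{x_2} v$ and $\frac{dx_1}{x_1} = dr$, so the cone gradient becomes the Euclidean gradient and the cone measure becomes Lebesgue measure:
\[
|\nabla_{\mathbb{B}} u|_2^2 = \int_{\Omega} |\nabla v|^2 \, dr\, dx_2, \qquad \int_{\mathbb{B}} e^{\alpha u^2} \frac{dx_1}{x_1} dx_2 = \int_{\Omega} e^{\alpha v^2} \, dr\, dx_2.
\]
Using the isomorphism $S_{\frac{n+1}{2},\gamma}$ from \eqref{e04} (extended by density as in Lemma \ref{l1}) together with the Dirichlet structure of Definition \ref{d6}, this transformation identifies $\mathcal{H}_2^{1,1}(\mathbb{B})$ with $H^1_0(\Omega)$, and the hypothesis $|\nabla_{\mathbb{B}} u|_2 \leq 1$ becomes $|\nabla v|_{L^2(\Omega)} \leq 1$.

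Next, since the strip $\Omega$ is bounded in the $x_2$-direction, the one-directional Poincaré inequality yields a uniform bound $|v|_{L^2(\Omega)} \leq C$. Extending $v$ by zero outside $\Omega$ gives $\tilde v \in H^1(\mathbb{R}^2)$ with $|\nabla \tilde v|_2 \leq 1$ and $|\tilde v|_2 \leq C$, and I then apply the sharp Ruf/Cao-type Moser--Trudinger inequality on $\mathbb{R}^2$ (in the spirit of \eqref{e41}) with best exponent $\alpha_2 = 2\omega_1 = 4\pi$ to obtain the desired uniform upper bound for $\int_{\Omega} e^{\alpha v^2} \, dr\, dx_2$. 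Pulling back through the substitution yields the claimed inequality on $\mathbb{B}$. For sharpness of $\alpha_2 = 4\pi$, one takes Moser's classical concentrating sequence on $\mathbb{R}^2$, translated so as to be supported inside $\Omega$, and pulls it back through $x_1 = e^{-r}$ to produce a test sequence in $\mathcal{H}_2^{1,1}(\mathbb{B})$ along which the left-hand side diverges whenever $\alpha > 4\pi$.

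The main technical obstacle I anticipate is making the correspondence between the weighted cone Sobolev space $\mathcal{H}_2^{1,1}(\mathbb{B})$ and the standard $H^1_0(\Omega)$ completely rigorous, particularly handling the ``far'' boundary at $r = \infty$ (equivalently $x_1 = 0^+$), the role of the cut-off $\omega$ appearing in Definition \ref{d6}, and the compatibility of the Dirichlet boundary condition with the transformation. Once this correspondence is cleanly established, the exponential inequality on the strip is a direct consequence of the existing sharp Moser--Trudinger inequalities on unbounded Euclidean domains.
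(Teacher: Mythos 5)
Your reduction to the strip is sound and in fact makes explicit what the paper does implicitly: writing $r=-\ln x_1$, the cone gradient $\nabla_{\mathbb{B}}=(x_1\partial_{x_1},\partial_{x_2})$ becomes the Euclidean gradient, $\frac{dx_1}{x_1}dx_2$ becomes $dr\,dx_2$, and the paper's ``balls'' $|x|^2=|\ln x_1|^2+x_2^2\leq R^2$ are just Euclidean balls in these coordinates. The difference is what happens after the unfolding. The paper symmetrizes in the $(r,x_2)$ variables (invoking a P\'olya--Szeg\"o inequality for $\nabla_{\mathbb{B}}$) and then reduces to Moser's one-dimensional lemma \eqref{e36}--\eqref{e37}, which is valid at the endpoint $\beta=1$ and therefore delivers the critical exponent $\alpha_2=4\pi$. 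You instead extend $v$ by zero to $\mathbb{R}^2$ and invoke an $\mathbb{R}^2$ inequality under the information $|\nabla \tilde v|_2\leq 1$, $|\tilde v|_2\leq C$, and this is where there is a genuine gap: the Adachi--Tanaka inequality in the spirit of \eqref{e41}/\eqref{e19} holds only for $\alpha<4\pi$, while Ruf's critical inequality requires the combined constraint $|\nabla \tilde v|_2^2+|\tilde v|_2^2\leq 1$, which you do not have (you only control the sum by $1+C^2$, and rescaling cannot repair this without shrinking the exponent). Indeed, on $\mathbb{R}^2$ the statement ``$|\nabla v|_2\leq 1$ and $|v|_2\leq M$ imply $\int_{\mathbb{R}^2}(e^{4\pi v^2}-1)\,dx\leq C(M)$'' is false: taking Moser bumps dilated to radius $R_k=\sqrt{k}$ keeps the gradient norm equal to $1$ and the $L^2$ norm bounded, while the exponential integral grows like $R_k^2$. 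So your final step proves the theorem only for $\alpha<\alpha_2$ (essentially Theorem \ref{t2}), not at the endpoint $\alpha=\alpha_2$, which is the point of Theorem \ref{t0}. To capture $\alpha=4\pi$ you must exploit the strip geometry beyond the Poincar\'e bound --- e.g., follow the paper's route (rearrangement in the $(r,x_2)$ variables plus the one-dimensional endpoint lemma), or prove a strip-specific critical inequality directly.

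A secondary remark: after unfolding, the strip has infinite Lebesgue measure, so any $\mathbb{R}^2$-type inequality can only bound $\int_{\Omega}(e^{\alpha v^2}-1)\,dr\,dx_2$, never $\int_{\Omega}e^{\alpha v^2}\,dr\,dx_2$ as literally claimed in the statement; the paper's own proof avoids this by working on the finite-measure symmetrized set $\mathbb{B}^{\ast}$ (this is really an imprecision in the statement of Theorem \ref{t0}, but your argument, as written, cannot produce the asserted bound either). Your sharpness construction for $\alpha>4\pi$ is fine and parallels Theorem \ref{t3}, but it is not what Theorem \ref{t0} asks for.
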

\begin{proof}
Let $B_r(1,0)$ be a  ball in $ \mathbb{R}_{+}^{2}$ with radius $R$, that is,
 $B_R(1,0)=\{(x_1, x_2)\in \mathbb{R}_{+}^{2}: |x|^2=|\ln x_1|^2+x_2^2\leq R^2 \}$.

We may assume that $u \geq 0$ since
we can replace $u$ by $|u|$ without increasing the integral of the gradient. Also, it
suffices to prove the statement for a set of functions $u$ which is dense in the unit
ball  of $\mathcal{H}_2^{1,1}(\mathbb{B})$. For example, we may assume that $u$ has compact support and
is in $C_0^{\infty}(\mathbb{B})$, or instead of the last requirement, is piecewise linear.

We use symmetrization: with $u(x)> 0$ we associate a function $u^{\ast}(x)$
depending on $|x|$ only by the requirement
$$|\{x: u^{\ast} > \rho\}| = |\{x \in  \mathbb{B} : u > \rho \mbox{\ for\  each\  } \rho\geq 0 \}|.$$

Clearly, $u^{\ast}$ is a decreasing function of $|x|$ which is 0 for $|x|>R$ where $R$ is the
radius of the sphere whose volume is

$$|(B_R(0,1))| = \int_{|x|\leq R}\frac{dx_1}{x_1}dx_2.$$
We define $\mathbb{B}^{\ast}$ as $|x|\leq R$. Similar to the Laplace operator,  we can build   heat kernel  theory  for the operator $\Delta_{\mathbb{B}}$. Hence we easily obtain  the following P\'{o}lya-Szeg\"{o} inequality
$$|\nabla_{\mathbb{B}} v^{\ast}|_2^2\leq |\nabla_{\mathbb{B}} v|_2^2,$$
while,
$$\int_{\mathbb{B}^{\ast}}e^{\alpha (u^{\ast})^p} \frac{dx_1}{x_1}dx_2=\int_{\mathbb{B}}e^{\alpha u^p} \frac{dx_1}{x_1}dx_2.$$

This reduces the problem at once to a one dimensional one. For convenience
we introduce the variable $t$ by $$\frac{|x|^2}{R^2}=e^{-t},$$
and set
$$w(t)=(2\omega_1)^{\frac{1}{2}}u^{\ast}(x).$$

Since spherical coordinate integral formula still holds in this sense,  $w$ is monotone increasing and
$$\int_0^{\infty} \dot{w}^2dt=\int_{\mathbb{B}^{\ast}} |\nabla_{\mathbb{B}} u^{\ast}|^2\frac{dx_1}{x_1}dx_2,$$
$$\int_0^{\infty} e^{\beta w^p-t} dt=\frac{1}{|\mathbb{B}^{\ast}|}\int_{\mathbb{B}^{\ast}} e^{\alpha (u^{\ast})^p}\frac{dx_1}{x_1}dx_2,$$
where $\beta=\alpha/\alpha_2$. Thus it is sufficient to prove:

If $q \geq 2$ and $w(t)$ is a $C^1$-function and $0 \leq t < \infty$ satisfying
\begin{equation}\label{e36}
 w(0)=0, \dot{w}\geq 0, \int_0^{\infty} \dot{w}^q(t)dt\leq 1,
\end{equation}
then
\begin{equation}\label{e37}
 \int_0^{\infty} e^{\beta w^p-t} dt\leq C_1,\ \ \mbox{if}\ \ \beta\leq 1, \frac{1}{p}+\frac{1}{q}=1,
\end{equation}
where the constant $C_1$  depends on $q$ only.

From H\"{o}lder's inequality
$$w(t)=\int_0^t \dot{w}(t)dt\leq t^{\frac{1}{p}}\left(\int_0^t \dot{w}^q(t)dt\right)^{\frac{1}{q}}\leq t^{\frac{1}{p}},$$
it is clear that
\begin{equation}\label{e38}
 \int_0^{\infty} e^{\beta w^p-t} dt\leq \int_0^{\infty} e^{(\beta-1)t}=\frac{1}{1-\beta},\ \ \mbox{for}\ \ \beta< 1.
\end{equation}

But for $\beta = 1$  we have to proceed more carefully. The same simple device allows
to show that the integral in (\ref{e37}) exists for any positive $\beta$. Indeed, given any $\varepsilon > 0$
there exists  $T = T(\varepsilon)$ such that $$\int_T^{\infty} \dot{w}^qdt<\varepsilon,$$
from which we conclude, again by H\"{o}lder inequality, that
$$w(t)\leq w(T)+\varepsilon^{\frac{1}{q}}(t-T)^{\frac{1}{p}},\ \ \mbox{for}\ \ t\geq T,$$
hence
$$\lim_{t\rightarrow\infty} \frac{w(t)}{t^{\frac{1}{p}}}=0.$$
Thus $\beta w^p < \frac{1}{2}t$ for sufficiently large $t$, which makes the existence of the integral
in (\ref{e37}).

Next we show that for $\beta > 1$,  this integral can be made arbitrarily large.
For this purpose we let $\eta(s) = \min \{s, 1\}$ and set  $w= t_1^{\frac{1}{p}}\eta(t_j/t_1)$. Then clearly,
this function satisfies (\ref{e36}) but
$$\int_0^{\infty} e^{\beta w^p-t} dt\geq \int_{t_1}^{\infty} e^{\beta t_1-t} dt=e^{(\beta-1)t_1}$$
tends to infinity as $t_1 \rightarrow \infty$.
\end{proof}

\begin{theorem}\label{t2}
For $\alpha\in (0,\alpha_2)$, there exists a constant $C=C(\alpha)>0$ such that
\begin{equation}\label{e19}
\int_{\mathbb{R}^{2}_{+}}\left( e^{\alpha\frac{|u(x)|^2}{|\nabla_{\mathbb{B}}  u(x)|_2^2}}-1\right)\frac{dx_1}{x_1}dx_2\leq C_{\alpha}\frac{|u(x)|_2^2}{|\nabla_{\mathbb{B}}  u(x)|_2^2}
\end{equation}
for $u\in \mathcal{H}_2^{1,1}(\mathbb{R}^2_{+})\setminus\{0\}$ . In particular, if $A_{\alpha}(s)=e^{\alpha s^2}-1$, $s\geq 0$, then
\begin{equation}\label{e35}
 |u(x)|_{A}\leq |\nabla_{\mathbb{B}} u(x)|_2
\end{equation}
for all $u\in \mathcal{H}_2^{1,1}(\mathbb{R}^2_{+})$.
\end{theorem}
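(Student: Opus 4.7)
The plan is to adapt the Adachi--Tanaka (\cite{Adac}) decomposition strategy for subcritical Moser--Trudinger on unbounded domains to the cone setting, using Theorem \ref{t0} as the bounded-domain input. By homogeneity in the scaling $u \mapsto u/|\nabla_{\mathbb{B}} u|_2$, it suffices to prove that whenever $|\nabla_{\mathbb{B}} u|_2 = 1$ one has
\begin{equation*}
\int_{\mathbb{R}^2_+}(e^{\alpha u^2}-1)\frac{dx_1}{x_1}\,dx_2 \;\leq\; C_\alpha\, |u|_2^2.
\end{equation*}
Replacing $u$ by $|u|$ (which does not increase $|\nabla_{\mathbb{B}} u|_2$), I may assume $u\geq 0$.

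Split $\mathbb{R}^2_+ = E \cup E^c$ with $E = \{u > 1\}$. On $E^c$ the trivial bound $u^{2k} \leq u^2$ for $k \geq 1$ and the power series of $e^{\alpha u^2}-1$ immediately give $\int_{E^c}(e^{\alpha u^2}-1)\frac{dx_1}{x_1}dx_2 \leq (e^\alpha-1)\,|u|_2^2$, which is already of the required form. Chebyshev with respect to the cone measure provides the essential finiteness $|E|\leq |u|_2^2 < \infty$. For the contribution on $E$, introduce the truncation $v := (u-1)_+$, which is supported in $\overline E$ and, by the usual chain rule, satisfies $|\nabla_{\mathbb{B}} v|_2 \leq |\nabla_{\mathbb{B}} u|_2 \leq 1$. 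On $E$, $u = v + 1$, so
\begin{equation*}
\alpha u^2 \;=\; \alpha v^2 + 2\alpha v + \alpha.
\end{equation*}
Pick $\delta > 0$ with $\alpha' := \alpha + \delta < \alpha_2 = 4\pi$; Young's inequality $2\alpha v \leq \delta v^2 + \alpha^2/\delta$ gives $\alpha u^2 \leq \alpha' v^2 + K_\alpha$ with $K_\alpha = \alpha + \alpha^2/\delta$. Hence
\begin{equation*}
\int_E (e^{\alpha u^2}-1)\frac{dx_1}{x_1}\,dx_2 \;\leq\; e^{K_\alpha}\int_E e^{\alpha' v^2}\frac{dx_1}{x_1}\,dx_2.
\end{equation*}
Since $\alpha' < \alpha_2$ and $|\nabla_{\mathbb{B}} v|_2 \leq 1$, Theorem \ref{t0} applied on $E$ bounds the right-hand side by $C(\alpha')|E|\leq C(\alpha')|u|_2^2$, and combining with the $E^c$ contribution yields (\ref{e19}).

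The main obstacle I anticipate is ensuring that the constant supplied by Theorem \ref{t0} is linear in $|E|$ rather than an unspecified function of $E$; this must be read off the symmetrization and one-dimensional reduction in its proof (the universal constant $C_1$ there depends only on $q$, and the spherical change of variables contributes exactly the factor $|\mathbb{B}^{\ast}|$), which gives the required $C(\alpha')|E|$. A secondary point is to justify that the truncation $v$ is admissible for Theorem \ref{t0} on the possibly non-smooth set $E$, which follows because only the support, the $L^2$-norm of $\nabla_{\mathbb{B}} v$, and the vanishing at $\partial E$ enter that proof.

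Finally, (\ref{e35}) is immediate from (\ref{e19}) via the Luxemburg-type definition of $|\cdot|_A$ recalled earlier in the paper: taking $\lambda = |\nabla_{\mathbb{B}} u|_2$ yields
\begin{equation*}
\int_{\mathbb{R}^2_+} A_\alpha\!\left(|u|/\lambda\right)\frac{dx_1}{x_1}dx_2 \;=\; \int_{\mathbb{R}^2_+}\bigl(e^{\alpha u^2/\lambda^2}-1\bigr)\frac{dx_1}{x_1}dx_2 \;\leq\; C_\alpha\,\frac{|u|_2^2}{\lambda^2} \;<\;+\infty,
\end{equation*}
so $|u|_A \leq \lambda = |\nabla_{\mathbb{B}} u|_2$, completing the proof.
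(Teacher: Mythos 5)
Your proposal is correct (at the paper's level of rigor) but it handles the main term by a genuinely different mechanism than the paper does. Both arguments normalize $|\nabla_{\mathbb{B}}u|_2=1$ and both ultimately hinge on controlling the level set $\{u>1\}$ by Chebyshev, $|\{u>1\}|\leq |u|_2^2$; but the paper never invokes Theorem \ref{t0}. Instead it symmetrizes $v=|u|/|\nabla_{\mathbb{B}}u|_2$ globally, passes to the radial profile $w(r)$, splits the radial integral at $r_0$ with $w(r_0)=1$, and estimates the inner part pointwise: the fundamental theorem of calculus plus H\"older give $w(r)\leq 1+\omega_1^{-1/2}\bigl(\ln(r_0/r)\bigr)^{1/2}$, the convexity trick $(1+s)^2\leq(1+\varepsilon)s^2+C_\varepsilon$ absorbs the additive $1$, and subcriticality $\alpha(1+\varepsilon)<\alpha_2$ makes the resulting radial integral converge to $C(\alpha)\,r_0^2$, while the outer part is handled by the same power-series bound you use on $\{u\leq 1\}$; finally $\alpha_2 r_0^2=|\{v^\ast>1\}|\leq\int_{\{v>1\}}v^2$. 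You replace the whole inner estimate by the truncation $v=(u-1)_+$, Young's inequality to trade the cross term for a slightly larger exponent $\alpha'=\alpha+\delta<\alpha_2$, and an appeal to Theorem \ref{t0} on a set of finite cone measure. Your route is more modular and avoids redoing the logarithmic rearrangement estimate, at the price of needing two facts that are not in the statement of Theorem \ref{t0} but only in its proof: that the constant there is linear in the cone measure of the underlying set, and that the symmetrization argument applies to a function supported in an arbitrary finite-measure set rather than a nice bounded domain; you correctly flag both, and both can indeed be read off the one-dimensional reduction (the constant $C_1$ depends only on $q$, and the change of variables contributes exactly the factor $|\mathbb{B}^\ast|$). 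The paper's more self-contained computation has the minor advantage of displaying explicitly how the constant blows up as $\alpha\uparrow\alpha_2$. Note also that, like the paper, you implicitly rely on the cone P\'olya--Szeg\H{o} inequality (through Theorem \ref{t0}), which the paper itself only asserts; and your derivation of \eqref{e35} from \eqref{e19} via admissibility of $\lambda=|\nabla_{\mathbb{B}}u|_2$ is the same as the paper's.
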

\noindent \textbf{Remark}: Note that the inequality  (\ref{e19}) is scale invariant, that is, if for $r>0$ we define the rescaled  function
$$u_{r}(x_1,x_2)=u(x_1^r,rx_2),\ \ \ (x_1,x_2)\in \mathbb{R}_{+}^{2},$$
then
$$\int_{\mathbb{R}^2_{+}}u^2(x^r_1,rx_2)\frac{dx_1}{x_1}dx_2=\frac{1}{r^2}\int_{\mathbb{R}^2_{+}} u^2(y_1,y_2)\frac{dy_1}{y_1}dy_2,$$
\begin{align*}
 \int_{\mathbb{R}^2_{+}}|\nabla_{\mathbb{B}} u_r|^2\frac{dx_1}{x_1}dx_2=\int_{\mathbb{R}^2_{+}}|\nabla_{\mathbb{B}} u|^2\frac{dx_1}{x_1}dx_2.
\end{align*}
and
$$\int_{\mathbb{R}^{2}_{+}}\left( e^{\alpha\frac{|u(x)|^2}{|\nabla_{\mathbb{B}}  u(x)|_2^2}}-1\right)\frac{dx_1}{x_1}dx_2=\frac{1}{r^2}\int_{\mathbb{R}^{2}_{+}}\left( e^{\alpha\frac{|u_r(x)|^2}{|\nabla_{\mathbb{B}}  u_r(x)|_2^2}}-1\right)\frac{dx_1}{x_1}dx_2.$$

\begin{proof}
Fix $u\in \mathcal{H}_2^{1,1}(\mathbb{R}^2_{+})\setminus\{0\}$ and define
$$v(x)=\frac{|u(x)|}{|\nabla_{\mathbb{B}} u (x)|_2}.$$
Then $\|\nabla_{\mathbb{B}} v\|_2=1$ and (\ref{e19}) reduces to
\begin{equation}\label{e33}
 \int_{\mathbb{R}^2_{+}}e^{\alpha v^2(x)}\frac{dx_1}{x_1}dx_2\leq C_{\alpha}|v(x)|^2_{2}.
\end{equation}
Let $v^{\ast}$  be the spherically symmetric rearrangement of $v$.
Then $v^{\ast}(x)=w(|x|)$, where $w$ is nonnegative, decreasing, and locally absolutely continuous. Hence,
$$\nabla_{\mathbb{B}}v^{\ast}=w'(|x|)\frac{x}{|x|}.$$
Similar to the Laplace operator,  we can build   heat kernel  theory  for the operator $\Delta_{\mathbb{B}}$.
Hence we easily obtain  the following P\'{o}lya-Szeg\"{o} inequality
$$|\nabla_{\mathbb{B}} v^{\ast}|_2^2\leq |\nabla_{\mathbb{B}} v|_2^2.$$
Now using spherical coordinates, if follows that
\begin{align}\label{e23}
  &\omega_1\int_0^{\infty} |w'(r)|^2rdr=|\nabla_{\mathbb{B}} v^{\ast}|_2^2\leq |\nabla_{\mathbb{B}} v|_2^2=1.
  \end{align}
  \begin{align}\label{e32}
  &\int_{\mathbb{R}^{2}_{+}} (v^{\ast})^2\frac{dx_1}{x_1}dx_2
= \int_{\mathbb{R}^{2}_{+}} v^2\frac{dx_1}{x_1}dx_2.
\end{align}
Define
\begin{equation}\label{e22}
r_0:=\inf\{r\geq 0: w(r)\leq 1\}.
\end{equation}
Since $w(r)\rightarrow 0$ as $r\rightarrow \infty$, we have that $r_0$ must be finite.

Using spherical coordinates, we have that
\begin{align}\label{e21}
\int_{\mathbb{R}^2_{+}} e^{\alpha v^2} \frac{dx_1}{x_1}dx_2&= \int_{\mathbb{R}^2_{+}} e^{\alpha (v^{\ast})^2} \frac{dx_1}{x_1}dx_2\nonumber\\
&= \omega_1\int_0^{\infty} e^{\alpha w^2(r)} rdr\nonumber\\
&=\omega_1\int_0^{r_0} e^{\alpha w^2(r)} rdr+\omega_1\int_{r_0}^{\infty} e^{\alpha w^2(r)} rdr=: I+II.
\end{align}

To estimate $I$, it is enough to consider the case that $r_0 > 0$, so that $w (r_0) =1 $ by (\ref{e22}).
 Since $w$ is locally absolutely continuous, by the fundamental
theorem of calculus, H\"{o}lder's inequality, (\ref{e23}) and
(\ref{e22}), for $0 < r < r_0$, we have that
\begin{align}\label{e24}
 w(r)&=w(r_0)-\int_r^{r_0} w'(\tau) d\tau\leq 1+\int_{r}^{r_0} |w'(\tau)|\frac{\tau^{2}}{\tau^{2}}d\tau\nonumber\\
 &\leq 1+\left(\int_0^{\infty} |w'(\tau)|^2\tau dr\right)^2\left(\ln\left(\frac{r_0}{r}\right)\right)^{\frac{1}{2}}\nonumber\\
 &\leq 1+\omega_1^{-\frac{1}{2}} \left(\ln\left(\frac{r_0}{r}\right)\right)^{\frac{1}{2}}.
\end{align}
By the convexity of the function $s^2$, for every $\varepsilon > 0$  we may find a constant
$C_{\varepsilon} > 0$ such that
$$(1+s)^2\leq (1+\varepsilon)s^2+C_{\varepsilon}$$
for all $s\geq 0$. Hence,
\begin{equation}\label{e25}
  w^2(r)\leq (1+\varepsilon)\omega_1^{-1} \ln\left(\frac{r_0}{r}\right)+C_{\varepsilon}
\end{equation}
for all $0<r<r_0$. Since $0<\alpha<\alpha_2$, we may take $\varepsilon>0$ so small that
$$\alpha(1+\varepsilon)<\alpha_2=2\omega_1,$$
and so $\alpha(1+\varepsilon)\beta^{-1}<2$. Hence, by \eqref{e25},

\begin{align}\label{e26}
I&\leq \omega_1 e^{\alpha C_{\varepsilon}}\int_{0}^{r_0} e^{\alpha (1+\varepsilon)\omega_1^{-1} \ln\left(\frac{r_0}{r}\right)}rdr\nonumber\\
&=\omega_1 e^{\alpha C_{\varepsilon}}r_0^{\alpha(1+\varepsilon)\beta^{-1}}\int_0^{r_0} r^{1-\gamma(1+\varepsilon)\omega_1^{-1}}dr\nonumber\\
&=\frac{\omega_1 e^{\alpha C_{\varepsilon}}}{1-\gamma(1+\varepsilon)\omega_1^{-1}} r_0^2=: C_1(2,\alpha)r_0^2.
\end{align}
On the other hand, by the Lebesgue monotone convergence theorem and the
fact that $w(r)<1$  for all $r > r_0$,  by (\ref{e22}), we have that

\begin{align}\label{e29}
II&\leq \omega_1 \sum_{n=1}^{\infty} \frac{1}{n!}\alpha^n\int_{r_0}^{\infty} w^{2n}(r)rdr\nonumber\\
&\leq \omega_1 \sum_{n=1}^{\infty} \frac{1}{n!}\alpha^n\int_{r_0}^{\infty} w^{2}(r)rdr\nonumber\\
&=\omega_1 e^{\alpha}\int_{r_0}^{\infty} w^2(r)rdr.
\end{align}
Combining this estimate with (\ref{e21}) and (\ref{e26}), we get
\begin{align}\label{e30}
\int_{\mathbb{R}^2_{+}} e^{\alpha v^2} \frac{dx_1}{x_1}dx_2\leq C_1(2,\alpha)r_0^2+\omega_1 e^{\alpha}\int_{r_0}^{\infty} w^2(r)rdr.
\end{align}
Using spherical coordinates and (\ref{e32}), one has that
\begin{equation}\label{e31}
 e^{\alpha}\omega_1\int_{r_0}^{\infty} w^2(r)rdr\leq \int_{\mathbb{R}^{2}_{+}} (v^{\ast})^2\frac{dx_1}{x_1}dx_2
= \int_{\mathbb{R}^{2}_{+}} v^2\frac{dx_1}{x_1}dx_2.
\end{equation}
Thus, to obtain (\ref{e33}), it remains to estimate in the case that $r_0 > 0$.
By (\ref{e22}) and the fact that w is decreasing, we have that $w(r)>w(r_0)=1$
if and only if $0<r<r_0$. Hence,
$$\{x\in  \mathbb{R}^{2}_{+}: v^{\ast}(x)>1\}=\{x\in  \mathbb{R}^{2}_{+}: w(|x|)>1\}=B_{r_0}(0,1).$$
Furthermore, we have
\begin{align}\label{e34}
\alpha_2r_0^2&= \big|\{x\in  \mathbb{R}^{2}_{+}: v^{\ast}(x)>1\}\big|\nonumber \\
&=\big|\{x\in  \mathbb{R}^{2}_{+}: v(|x|)>1\}\big|\nonumber \\
&\leq \int_{\{v>1\}} v^2 \frac{dx_1}{x_1}dx_2,
\end{align}
and so we have proved (\ref{e33}), and in turn, (\ref{e19}).

By (\ref{e19}), the number $s= |\nabla_{\mathbb{B}} u|_2^2 $ is admissible in the definition
of $|u|_A$,  and so (\ref{e35}) follows.
\end{proof}

\begin{theorem}\label{t3}
For $\alpha\geq \alpha_2$, there exists a sequence $\{u_k(x)\}\subset \mathcal{H}_2^{1,1}(\mathbb{R}^2_{+})$ such that
$|\nabla u|_2=1$ and
\begin{equation}\label{e50}
\frac{1}{|u_k|_2^2}\int_{\mathbb{R}^{2}_{+}} \left(e^{\alpha\frac{|u_k(x)|^2}{|\nabla_{\mathbb{B}}  u_k(x)|_2^2}}-1\right)\frac{dx_1}{x_1}dx_2\geq \frac{1}{|u_k|_2^2}\int_{\mathbb{R}^{2}_{+}} \left(e^{\alpha_2\frac{|u_k(x)|^2}{|\nabla_{\mathbb{B}}  u_k(x)|_2^2}}-1\right)\frac{dx_1}{x_1}dx_2\rightarrow \infty,
\end{equation}
as $k\rightarrow\infty$.
\end{theorem}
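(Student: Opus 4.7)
The plan is to exhibit the classical Moser concentrating sequence, transported to $\mathbb{R}^2_+$ via the cone-to-Euclidean change of variables. First I would note that the substitution $y_1 = -\log x_1$, $y_2 = x_2$ maps $\mathbb{R}^2_+$ diffeomorphically onto $\mathbb{R}^2$, carries $\frac{dx_1}{x_1}\,dx_2$ into Lebesgue measure, and sends $x_1\partial_{x_1}$ to $-\partial_{y_1}$; consequently it preserves both $|u|_2$ (the $L_2^1$ norm on $\mathbb{R}^2_+$) and $|\nabla_{\mathbb{B}} u|_2$, and identifies $\mathcal{H}_2^{1,1}(\mathbb{R}^2_+)$ with $H^1(\mathbb{R}^2)$. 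Under this identification the cone radial coordinate $r = \sqrt{(\log x_1)^2 + x_2^2}$ becomes the Euclidean radius $|y|$, and the critical exponent $\alpha_2 = 2\omega_1 = 4\pi$ is exactly the Moser--Trudinger critical exponent in $\mathbb{R}^2$. The assertion therefore reduces to the sharpness of the Adachi--Tanaka inequality in the plane, which I would prove with the standard Moser sequence.

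Accordingly, for each integer $k \geq 2$ I would define
\[
u_k(x_1,x_2) \;=\; \frac{1}{\sqrt{2\pi}}\begin{cases} \sqrt{\log k}, & 0 \leq r \leq 1/k,\\[2pt] \log(1/r)/\sqrt{\log k}, & 1/k \leq r \leq 1,\\[2pt] 0, & r \geq 1, \end{cases}
\]
so $u_k$ is continuous, piecewise smooth, and compactly supported in $\{x_1 \in [e^{-1},e]\}$, hence in $\mathcal{H}_2^{1,1}(\mathbb{R}^2_+)$. I would then verify three estimates, each a direct polar-coordinate computation in the Euclidean picture. (i) Normalization: $|\nabla_{\mathbb{B}} u_k|_2^2 = \frac{1}{\log k}\int_{1/k}^{1}\frac{dr}{r} = 1$. (ii) $L_2^1$-decay: splitting $|u_k|_2^2$ into the core and annulus and substituting $s=\log(1/r)$ in the annular piece gives $|u_k|_2^2 = \frac{\log k}{2k^2} + \frac{1}{\log k}\int_0^{\log k} s^2 e^{-2s}\,ds = \frac{1+o(1)}{4\log k}$. (iii) Core lower bound: since $u_k^2 = \log k/(2\pi)$ on $\{r \leq 1/k\}$,
\[
\int_{\mathbb{R}^2_+}\bigl(e^{\alpha_2 u_k^2}-1\bigr)\frac{dx_1}{x_1}\,dx_2 \;\geq\; \frac{\pi}{k^2}\bigl(k^{\alpha_2/(2\pi)}-1\bigr) \;=\; \pi\bigl(1-k^{-2}\bigr).
\]

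Combining (ii) and (iii) yields
\[
\frac{1}{|u_k|_2^2}\int_{\mathbb{R}^2_+}\bigl(e^{\alpha_2 u_k^2}-1\bigr)\frac{dx_1}{x_1}\,dx_2 \;\geq\; 4\pi\log k \,(1+o(1)) \;\longrightarrow\; \infty,
\]
which is the claimed critical-exponent divergence. For $\alpha > \alpha_2$, the pointwise monotonicity $e^{\alpha u_k^2} \geq e^{\alpha_2 u_k^2}$ supplies the first inequality in \eqref{e50} and the divergence holds a fortiori. The only genuine conceptual step is the first paragraph, namely recognizing that the cone problem on $\mathbb{R}^2_+$ is an equivalent reformulation of the Euclidean problem on $\mathbb{R}^2$; once that is in place, nothing beyond the classical Moser construction and a few elementary integrals is required, and no substantive obstacle arises.
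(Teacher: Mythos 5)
Your proposal is correct and takes essentially the same route as the paper: the paper also exhibits the classical Moser concentrating sequence written in the cone radial variable $r=\sqrt{(\ln x_1)^2+x_2^2}$ (its explicit $u_k$ coincides with yours after the reparametrization $k\mapsto 2\log k$), and reduces the claim to checking $|\nabla_{\mathbb{B}}u_k|_2=1$, $|u_k|_2^2\to 0$, and a uniform lower bound on the exponential integral, exactly as in your estimates (i)--(iii). Your explicit logarithmic change of variables to $\mathbb{R}^2$ is just the isomorphism $S_{\frac{n+1}{2},\gamma}$ already set up in Section 2 of the paper, so the two arguments agree in substance, with yours simply carrying out the verifications the paper leaves as ``easily seen.''
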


\begin{proof}
We shall take  similar  argument in the proof of Theorem \ref{t2}.  It suffices to find a sequence of
functions $w(r)=u_k(|x|)$ which satisfies
\begin{align}\label{e52}
  &\omega_1 \int_0^{\infty} |w_k'(r)|^2rdr= |\nabla_{\mathbb{B}} u_k|_2^2=1,
  \end{align}
and
\begin{align}\label{e53}
  &\int_{\mathbb{R}^{2}_{+}} u_k^2(|x|)\frac{dx_1}{x_1}dx_2
= \omega_1\int_0^{\infty} w_k^{2}(r)rdr\rightarrow 0,
\end{align}

\begin{equation}\label{e54}
\int_{\mathbb{R}^2_{+}} e^{\alpha u_k^2(|x|)} \frac{dx_1}{x_1}dx_2=\omega_1\int_0^{\infty} e^{\alpha w_k^2(r)}rdr\geq \frac{1}{2}.
\end{equation}
Here we give an example of $u_{k}(r)$ explicitly. We set
\begin{equation}
 u_k(r)=\left \{
\begin{aligned}
  &0, &\mbox{if}\ r\geq 0, \\
   &\frac{-2\ln r}{\sqrt{2\omega_1}}k^{-\frac{1}{2}},\ \quad &\mbox{if}\   e^{-\frac{k}{2}}< r\leq 1, \\
  & \frac{1}{\sqrt{2\omega_1}} k^{\frac{1}{2}},\  &\mbox{if}\   0< r\leq e^{-\frac{k}{2}}.
   \end{aligned}\right.
\end{equation}
It is easily see  that $u_{k}$ satisfies (\ref{e52})--(\ref{e54}).

\end{proof}


\section{Nonlinear Dirichlet boundary value problems}
In this section, we consider the following Dirichlet boundary value problems
\begin{equation}
\left\{
\begin{array}{ll}
-\Delta_{\mathbb{B}} u=f(x,  u), &\mbox{in}\  x\in \mbox{int} (\mathbb{B}),    \\
u= 0,  &\mbox{on}\  \partial\mathbb{B},
\end{array}
\right.    \label{P}
\end{equation}
where, $-\Delta_{\mathbb{B}} =(x_1\partial_{x_1})^2+(\partial_{x_2})^2$, $f$ is  a continuous real function and satisfies the following assumptions:
\begin{itemize}\addtolength{\itemsep}{-1.5 em} \setlength{\itemsep}{-5pt}

\item[$(f_1)$]  $f\in C(\bar{\mathbb{B}}\times \mathbb{R})$ with $f(x, 0)=0$ and  $f(x,t)$ has subcritical exponential growth on $\mathbb{B}$, i,e,
  $$\lim_{t\rightarrow +\infty }\frac{|f(x,t)|}{e^{\alpha t^2}}=0,  \mbox{\ uniformly on\ }   x\in \mathbb{B}  \mbox{\ for all} \ \alpha>0, $$

\item[$(f'_{1})$]   $f\in C(\bar{\mathbb{B}}\times \mathbb{R})$ with $f(x, 0)=0$ and $f(x,t)$ critical exponential growth on $\mathbb{B}$, i,e, there exists $\alpha_0>0$ such that
   $$\lim_{t\rightarrow +\infty }\frac{|f(x,t)|}{e^{\alpha t^2}}=0,  \mbox{\ uniformly on\ }   x\in \mathbb{B}  \mbox{\ for all}\  \alpha>\alpha_0, $$

and

 $$\lim_{t\rightarrow +\infty }\frac{|f(x,t)|}{e^{\alpha t^2}}=+\infty,  \mbox{\ uniformly on\ }   x\in \mathbb{B}  \mbox{\ for all}\  \alpha<\alpha_0, $$

    \item[($f_2$)]
  $$\displaystyle\lim_{|t|\rightarrow \infty}\frac{F(x, t)}{|t|^{2}}=+\infty, \mbox{\ uniformly on\ }   x\in \mathbb{B}.$$

  \item[$(f_{3})$]   there exists $\theta\geq 1$ such that $\theta\mathcal{F}(x, t)\geq \mathcal{F}(x, st)$ for $(x, t)\in \mathbb{B}\times\mathbb{R}$ and $s\in[0, 1]$,  where, $F (x, t):=\int_0^tf (x, s) ds,  \mathcal{F}(x, t):= f (x, t)t - 2F(x, t)$,


   \item[$(f_{4})$]    $$\limsup_{t\rightarrow 0^{+}}\frac{|2F(x,t)|}{ |t|^2}<\lambda_1,  \mbox{\ uniformly on\ }   x\in \mathbb{B}.$$
  where  $\lambda_1$ is the first eigenvalue of $-\Delta_{\mathbb{B}}$   with Dirichlet problem (see \cite{CLW2}).

\end{itemize}

We define the functional $$I(u)=\frac{1}{2}\int_{\mathbb{B}} |\nabla_{\mathbb{B}} u|^2\frac{dx_1}{x_1}dx_2-\int_{\mathbb{B}}F(x,u)\frac{dx_1}{x_1}dx_2,\ u\in \mathcal{H}_{2,0}^{1,1}(\mathbb{B}).$$
It is easy to check that $I\in C^1(\mathcal{H}_{2,0}^{1,1}(\mathbb{B}), \mathbb{R})$, and
 the critical point of $I$ are precisely the weak solutions of problem (\ref{P}). We will prove the
existence of such critical points by the Mountain Pass Theorem.  Recently, there are some
interesting results about nonlinear differential equations on manifolds with conical singularities (see \cite{Alim,CLiu,CLW, CLW2, Chen3}).

 \begin{definition}\label{d43}
 Let $(X,\|\cdot\|_{X})$ be a reflexive  Banach space with its dual space $(X^{\ast},\|\cdot\|_{X^{\ast}})$ and $I\in C^1(X,\mathbb{R})$. For $c\in\mathbb{R}$,
 we say that $I$ satisfies the $(C)_c$ condition if for any sequence $\{x_n\}\subset X$ with
 $$I(x_n)\rightarrow c,\ \ (1+\|x_n\|)\|I'(x_n)\|\rightarrow 0\ \ \mbox{in}\ X^{\ast},$$
 there is  a subsequence $\{x_{n_k}\}$ such that $\{x_{n_k}\}$ converges strongly in $X$.
\end{definition}
\begin{proposition}[See \cite{MR0370183}, Mountain Pass Theorem]\label{mmp1}
Let $(X,\|\cdot\|_{X})$ be a reflexive  Banach space, $I\in C^1(X,\mathbb{R})$ satisfies the $(C)_c$ condition for any $c\in \mathbb{R}$, $I(0)=0$ and
\begin{itemize}\addtolength{\itemsep}{-1.5 em} \setlength{\itemsep}{-5pt}
  \item[(1)] There are constants $\rho,\alpha>0$ such that $I\mid_{\partial B_{\rho}}\geq \alpha$;
  \item[(2)] There exists   $e\in X\setminus B_{\rho}$ such that $I(e)\leq 0$.
\end{itemize}
Then $\displaystyle c=\inf_{\gamma\in \Gamma}\max_{\ 0\leq t\leq 1} I(\gamma(t))\geq \alpha$ is a critical point of $I$ where
$$\Gamma=\{\gamma\in C^0([0,1],X), \gamma(0)=0, \gamma(1)=e\}.$$
\end{proposition}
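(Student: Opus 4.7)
My plan is to carry out the classical Ambrosetti--Rabinowitz minimax argument, adapted from the Palais--Smale setting to the Cerami condition as in Cerami's papers and Bartolo--Benci--Fortunato. First I would note that $\Gamma$ is non-empty because the straight segment $\gamma_0(t):=te$ lies in it. Hypothesis (2) forces $\|e\|_X\ge\rho$, so for every $\gamma\in\Gamma$, the continuous function $t\mapsto\|\gamma(t)\|_X$ takes the value $0$ at $t=0$ and a value $\ge\rho$ at $t=1$, hence there is some $t^{\ast}\in(0,1)$ with $\|\gamma(t^{\ast})\|_X=\rho$. Hypothesis (1) then gives
\[
\max_{t\in[0,1]} I(\gamma(t))\;\ge\; I(\gamma(t^{\ast}))\;\ge\;\alpha,
\]
and taking the infimum over $\gamma\in\Gamma$ yields $c\ge\alpha>0$.

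Next, I would manufacture a Cerami sequence $\{x_n\}\subset X$ at level $c$, i.e.\ $I(x_n)\to c$ and $(1+\|x_n\|_X)\|I'(x_n)\|_{X^{\ast}}\to 0$. The cleanest route is to endow $\Gamma$ with the complete metric $d(\gamma_1,\gamma_2)=\max_{t\in[0,1]}\|\gamma_1(t)-\gamma_2(t)\|_X$ and apply Ekeland's variational principle to the lower semicontinuous functional $J(\gamma):=\max_{t\in[0,1]} I(\gamma(t))$, whose infimum over $\Gamma$ equals $c$. For a sequence $\varepsilon_n\downarrow 0$ this produces near-minimizing paths $\gamma_n$, and small variations of $\gamma_n$ along a rescaled pseudogradient vector field $v(x)=w(x)/(1+\|x\|_X)$, where $w$ is a locally Lipschitz pseudogradient for $I$ on $\{I'\ne 0\}$, force the existence of points $x_n$ on the traces of $\gamma_n$ at which $I(x_n)\to c$ and $(1+\|x_n\|_X)\|I'(x_n)\|_{X^{\ast}}\to 0$. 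The weight $(1+\|x\|_X)^{-1}$ in the field is exactly what converts ordinary Palais--Smale control into Cerami control.

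Finally, I would invoke the $(C)_c$ hypothesis to extract a subsequence $x_{n_k}\to x^{\ast}$ strongly in $X$. Continuity of $I$ and $I'$ on $X$ gives $I(x^{\ast})=c\ge\alpha>0$ and $I'(x^{\ast})=0$, so $x^{\ast}$ is a nontrivial critical point at the minimax level, as claimed. The main obstacle is the weighted deformation step: one must verify that the rescaled flow $\dot\phi_s=-v(\phi_s)$ is globally defined and locally Lipschitz, and that the flow time needed to decrease $J$ by a definite amount remains controlled despite trajectories possibly running off to infinity; the factor $(1+\|x\|_X)^{-1}$ is precisely what rescues this, ensuring integrability in $s$ and allowing us to trade Palais--Smale-type deformations for Cerami-type ones without losing the mountain-pass geometry.
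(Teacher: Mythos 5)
The paper does not prove this proposition at all: it is stated as a known result, cited to Ambrosetti--Rabinowitz (in the Cerami-condition variant that actually goes back to Cerami and Bartolo--Benci--Fortunato), so there is no internal proof to compare yours against. Your sketch is the standard argument and is correct in outline. The intersection step is complete: since $\|e\|_X\ge\rho$, every path in $\Gamma$ crosses $\partial B_\rho$, giving $c\ge\alpha>0$. The production of a Cerami sequence via Ekeland's principle on the complete metric space $(\Gamma,d)$, combined with variations along the rescaled pseudogradient field $v(x)=w(x)/(1+\|x\|_X)$, is exactly the right mechanism, and you correctly identify the weight $(1+\|x\|_X)^{-1}$ as what upgrades Palais--Smale control to Cerami control. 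Two points you leave implicit and should make explicit if you write this out in full: (i) the perturbations of a near-optimal path must fix the endpoints, which is legitimate precisely because $c\ge\alpha>0\ge\max\{I(0),I(e)\}$, so the deformation can be supported in the region where $I$ is close to the level $c$, away from $\gamma(0)$ and $\gamma(1)$; (ii) $J(\gamma)=\max_{t}I(\gamma(t))$ is in fact continuous (not merely lower semicontinuous) on $\Gamma$, so Ekeland applies directly, and $\Gamma$ is complete as a closed subset of $C([0,1],X)$. Also note that the reflexivity hypothesis on $X$ plays no role in the proof; it is carried along only because of the way the paper states Definition \ref{d43}. With these details filled in, your argument is the classical one and proves the proposition as stated.
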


Next we denote  $\|\cdot\|_{\mathcal{H}_{2,0}^{1,1}(\mathbb{B})}$
 by $\|\cdot\|$, and we can prove the following results:
\begin{theorem}\label{t41}
Assume that $(f’_{1})$--$(f_{4})$   are satisfied, then problem (\ref{P}) has a nontrivial solution in $\mathcal{H}_{2,0}^{1,1}(\mathbb{B})$.
\end{theorem}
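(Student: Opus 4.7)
The plan is to apply the Mountain Pass Theorem (Proposition \ref{mmp1}) to the functional $I$ on $\mathcal{H}_{2,0}^{1,1}(\mathbb{B})$, checking its geometry and the Cerami condition. For the geometry, condition (1) is obtained by splitting $2F(x,t)\leq \mu t^2$ for $|t|\leq \delta$ (from $(f_4)$, with some $\mu<\lambda_1$) and $|F(x,t)|\leq C_{\alpha,q}|t|^q(e^{\alpha t^2}-1)$ for $|t|>\delta$ (from the strong subcritical bound $(f_1)$, with any $\alpha>0$ and $q>2$). Combining the Poincar\'e-type estimate $|u|_2^2\leq \lambda_1^{-1}\|u\|^2$, the compact embedding into $L^{2q}(\mathbb{B})$ from Lemma \ref{l3}(5), and the cone Moser-Trudinger inequality (Theorem \ref{t0}) applied to $u/\|u\|$ for $\|u\|$ so small that $\alpha\|u\|^2<\alpha_2$, one obtains $I(u)\geq \tfrac12(1-\mu/\lambda_1)\|u\|^2-o(\|u\|^2)$, so $I\mid_{\partial B_\rho}\geq \kappa>0$ for small $\rho$. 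Condition (2) uses $(f_2)$: for a fixed nonnegative $u_0\in C_0^{\infty}(\mathrm{int}(\mathbb{B}))\setminus\{0\}$ and any $M>0$ there is $C_M$ with $F(x,t)\geq Mt^2-C_M$, so $I(tu_0)\to -\infty$ as $t\to\infty$, and $e:=t^{\ast}u_0$ with $t^{\ast}$ large meets $I(e)\leq 0$.

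The technical heart is the Cerami condition $(C)_c$. Let $\{u_n\}$ satisfy $I(u_n)\to c$ and $(1+\|u_n\|)\|I'(u_n)\|\to 0$; since $(f_3)$ is strictly weaker than the Ambrosetti-Rabinowitz condition, I would establish boundedness via the Jeanjean monotonicity trick. Assume $\|u_n\|\to\infty$ and set $v_n:=u_n/\|u_n\|$; pass to a subsequence so that $v_n\rightharpoonup v$ weakly, $v_n\to v$ in $L^p(\mathbb{B})$ for all $p\geq 2$ by Lemma \ref{l3}(5), and $v_n\to v$ a.e. If $v\not\equiv 0$, then $|u_n|\to\infty$ a.e.\ on $\{v\neq 0\}$, and $(f_2)$ with Fatou's lemma forces $I(u_n)/\|u_n\|^2\to -\infty$, contradicting $I(u_n)\to c$. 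If $v\equiv 0$, then for any fixed $R>0$ one may choose $\alpha>0$ small and $r>1$ with $r\alpha R^2<\alpha_2$, so Theorem \ref{t0} bounds $\int_{\mathbb{B}}(e^{\alpha R^2 v_n^2}-1)^r\,\frac{dx_1}{x_1}dx_2$ uniformly; combined with $v_n\to 0$ in $L^p$ and the subcritical estimate on $f$, this gives $\int_{\mathbb{B}} F(x,Rv_n)\,\frac{dx_1}{x_1}dx_2\to 0$, hence $I(Rv_n)\to R^2/2$. Picking $t_n\in[0,1]$ with $I(t_nu_n)=\max_{s\in[0,1]}I(su_n)$, the relation $Rv_n=(R/\|u_n\|)u_n$ with $R/\|u_n\|\in[0,1]$ eventually gives $I(t_nu_n)\to+\infty$. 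The cases $t_n\in\{0,1\}$ are excluded, so $\langle I'(t_nu_n),t_nu_n\rangle=0$, and $(f_3)$ yields
\begin{equation*}
2I(t_nu_n)=\int_{\mathbb{B}}\mathcal{F}(x,t_nu_n)\,\frac{dx_1}{x_1}dx_2\leq \theta\int_{\mathbb{B}}\mathcal{F}(x,u_n)\,\frac{dx_1}{x_1}dx_2\to 2\theta c,
\end{equation*}
contradicting $I(t_nu_n)\to+\infty$. Hence $\{u_n\}$ is bounded.

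With boundedness in hand, $u_n\rightharpoonup u$ in $\mathcal{H}_{2,0}^{1,1}(\mathbb{B})$, $u_n\to u$ in $L^p(\mathbb{B})$ for every $p\geq 2$, and $u_n\to u$ a.e. The subcritical bound $|f(x,t)|\leq \varepsilon|t|+C_{\varepsilon,q}|t|^{q-1}(e^{\alpha t^2}-1)$ from $(f_1)$, combined via H\"older's inequality with the uniform bound on $\int_{\mathbb{B}}(e^{\alpha r u_n^2}-1)\,\frac{dx_1}{x_1}dx_2$ for some $r>1$ (Theorem \ref{t0}, using $\|u_n\|$ bounded), yields $\int_{\mathbb{B}} f(x,u_n)(u_n-u)\,\frac{dx_1}{x_1}dx_2\to 0$; together with $\langle I'(u_n),u_n-u\rangle\to 0$ this gives $\|u_n-u\|\to 0$. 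Proposition \ref{mmp1} then produces a critical point $u$ of $I$ at the mountain pass level $c\geq \kappa>0$, which is the desired nontrivial weak solution of (\ref{P}). The main obstacle throughout is the boundedness of Cerami sequences under the weaker hypothesis $(f_3)$ in place of Ambrosetti-Rabinowitz; once this is handled via the Jeanjean argument above, the remainder reduces to a fairly standard application of the cone Moser-Trudinger inequality proved in Theorem \ref{t0}.
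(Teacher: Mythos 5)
Your proposal is correct and follows essentially the same route as the paper: the mountain pass geometry is obtained exactly as in Lemmas \ref{l46} and \ref{l47} (via $(f_2)$ for the negative direction and $(f_1)$, $(f_4)$ with the cone Moser--Trudinger inequality for the local minimum), and the Cerami compactness is proved as in Lemma \ref{l48} by the same Jeanjean-type maximization $t_n u_n$, the dichotomy $w\equiv 0$ versus $w\not\equiv 0$ with Fatou, the quasi-monotonicity condition $(f_3)$, and the subcritical exponential estimate combined with Theorem \ref{t0} for the strong convergence. No substantive difference from the paper's argument.
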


\begin{theorem}\label{t42}
Assume that $(f'_{1})$, $(f_{2})$--$(f_{4})$   and
\begin{itemize}\addtolength{\itemsep}{-1.5 em} \setlength{\itemsep}{-5pt}
 \item[$(f_{5})$] $\displaystyle\lim_{t\rightarrow+\infty} f(x,t)t e^{-\alpha_0t^2}\geq \beta>\left(\frac{2}{d}\right)^2\frac{1}{M\alpha_0}$, uniformly in $(x,t)$
 where $d$ is the inner radius of $\mathbb{B}$, i.e. $d:=$ radius of the largest open ball $\subset \mathbb{B}$,
 $$M=\lim_{n\rightarrow\infty} n\int_0^1 e^{n(t^2-t)}dt\ (\geq 2),$$
 and
\item[$(f_{6})$] $f$ is class $(A_0)$, i.e. for any $\{u_n\}$ in $\mathcal{H}_{2,0}^{1,1}(\mathbb{B})$, if
$
\left\{
\begin{array}{ll}
u_n\rightharpoonup 0 &\mbox{in}\   \mathcal{H}_{2,0}^{1,1}(\mathbb{B}),    \\
f(x,u_n)\rightarrow 0,  &\mbox{in}\  L_1^1(\mathbb{B}),
\end{array}
\right.
$  then $F(x,u_n)\rightarrow 0$ in $L^1_1(\mathbb{B})$ (up to a subsequence),
\end{itemize}
 are satisfied, then problem (\ref{P}) has a nontrivial solution in $\mathcal{H}_{2,0}^{1,1}(\mathbb{B})$.
\end{theorem}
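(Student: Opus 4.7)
I will apply the Mountain Pass Theorem (Proposition~4.2) to the $C^1$ functional $I$ on $X=\mathcal{H}_{2,0}^{1,1}(\mathbb{B})$, with $\|\cdot\| = \|\cdot\|_{\mathcal{H}_{2,0}^{1,1}(\mathbb{B})}$. The proof proceeds in four steps.

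\textbf{Step 1 (mountain pass geometry).} From $(f_1')$ and $(f_4)$ one extracts, for each $\alpha>\alpha_0$, $q>2$ and some $\lambda\in(0,\lambda_1)$, a constant $C_\alpha>0$ such that
\begin{equation*}
F(x,t)\le \frac{\lambda}{2}|t|^2 + C_\alpha |t|^q\bigl(e^{\alpha t^2}-1\bigr),\qquad (x,t)\in\mathbb{B}\times\mathbb{R}.
\end{equation*}
For $\|u\|=\rho$ sufficiently small, $\alpha\|u\|^2<\alpha_2=4\pi$, so the cone Moser-Trudinger inequality (Theorem~3.1 with scaling), H\"older's inequality and the continuous embedding $X\hookrightarrow L_q^1(\mathbb{B})$ from Lemma~2.3 yield $I(u)\ge\delta>0$. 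Condition $(f_2)$ shows $I(t\varphi)/t^2\to-\infty$ as $t\to\infty$ for any nonnegative $\varphi\in C_0^\infty(\mathrm{int}(\mathbb{B}))$, so $e=t_0\varphi$ with $t_0$ large fulfills $I(e)\le 0$ and $\|e\|>\rho$.

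\textbf{Step 2 (sub-critical level estimate).} Using the Moser-type sequence $\{u_k\}$ constructed in the proof of Theorem~3.3, rescaled so that $\mathrm{supp}(u_k)$ sits inside the largest ball of radius $d$ contained in $\mathbb{B}$ and $\|u_k\|=1$, I will estimate $\max_{t\ge 0}I(t u_k)$. Hypothesis $(f_5)$, together with the explicit form of $u_k$ and the definition of the constant $M$, produces, for some $k$,
\begin{equation*}
\max_{t\ge 0}I(tu_k) < \frac{2\pi}{\alpha_0} = \frac{\alpha_2}{2\alpha_0}.
\end{equation*}
Consequently the mountain pass level $c$ given by Proposition~4.2 satisfies $c<2\pi/\alpha_0$.

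\textbf{Step 3 (boundedness of Cerami sequences).} Let $\{u_n\}\subset X$ be a Cerami sequence at level $c$, i.e.\ $I(u_n)\to c$ and $(1+\|u_n\|)\|I'(u_n)\|_{X^{\ast}}\to 0$. Subtracting yields
\begin{equation*}
\int_{\mathbb{B}}\mathcal{F}(x,u_n)\,\frac{dx_1}{x_1}dx_2 = 2I(u_n)-\langle I'(u_n),u_n\rangle \longrightarrow 2c.
\end{equation*}
Assuming $\|u_n\|\to\infty$ and testing $I'$ against $u_n/\|u_n\|$ along a suitable rescaling, the monotonicity of $\mathcal{F}$ in $(f_3)$ (a Jeanjean-type substitute for Ambrosetti-Rabinowitz) produces a contradiction, so $\sup_n\|u_n\|<\infty$. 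Up to a subsequence, $u_n\rightharpoonup u$ in $X$, and by the compact embedding in Lemma~2.3(5), $u_n\to u$ strongly in $L_p^1(\mathbb{B})$ for every $p\ge 2$ and a.e.\ on $\mathbb{B}$.

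\textbf{Step 4 (strong convergence and nontriviality).} The Cerami identity combined with $F\ge 0$ and $c<2\pi/\alpha_0$ forces $\alpha_0\limsup_n\|u_n\|^2<\alpha_2$, so Theorem~3.1 gives a uniform bound for $\{e^{\alpha_0 u_n^2}\}$ in $L^r(\mathbb{B};x_1^{-1}dx_1dx_2)$ with some $r>1$. Together with $(f_1')$ this yields equi-integrability of $f(\cdot,u_n)$; Vitali's theorem then delivers $f(x,u_n)\to f(x,u)$ in $L_1^1(\mathbb{B})$. Hypothesis $(f_6)$ produces $F(x,u_n)\to F(x,u)$ in $L_1^1(\mathbb{B})$. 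Passing to the limit in $\langle I'(u_n),\varphi\rangle=o(1)$ for $\varphi\in C_0^\infty(\mathrm{int}(\mathbb{B}))$ shows that $u$ is a weak solution of~(\ref{P}); the lower bound $I(u)=c\ge\delta>0$ provided by Step~1 excludes the trivial solution.

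\textbf{Main obstacle.} The delicate point is Step~2. The explicit form of the cone MT extremal sequence combined with a change of variables (the $x_1\to -\ln x_1$ transformation used throughout Section~3) must be reconciled with the quantitative threshold $\beta>(2/d)^2/(M\alpha_0)$ in $(f_5)$; the constant $M$ arises precisely from the sharp second-order expansion of $\int_0^1 e^{n(t^2-t)}dt$ that is needed to push the candidate level below $2\pi/\alpha_0$. Once this critical-level estimate is in place, the remaining compactness argument in Step~4 is essentially standard.
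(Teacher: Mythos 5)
Your Steps 1--3 follow the same route as the paper: mountain pass geometry from $(f_1')$, $(f_2)$, $(f_4)$; the level estimate $\max_{t\ge0}I(tM_2)<\tfrac12\,\alpha_2/\alpha_0$ via Moser functions supported in the inner ball of radius $d$ together with $(f_5)$ (the paper cites the computation in de Figueiredo--Miyagaki--Ruf); and boundedness of the Cerami sequence by the Jeanjean-type argument with the maximizers $t_n$ and $(f_3)$. The genuine gap is in your Step 4. You claim that the Cerami identity, $F\ge0$ and $c<\alpha_2/(2\alpha_0)$ force $\alpha_0\limsup_n\|u_n\|^2<\alpha_2$. This does not follow: from $I(u_n)\to c$ one only gets
\begin{equation*}
\|u_n\|^2=2I(u_n)+2\int_{\mathbb{B}}F(x,u_n)\,\frac{dx_1}{x_1}dx_2,
\end{equation*}
so the desired norm bound requires $\int_{\mathbb{B}}F(x,u_n)\,\frac{dx_1}{x_1}dx_2\to0$, which is not available in general (and $F\ge0$ is not even among the hypotheses). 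Consequently the uniform $L^r$ bound on $e^{\alpha_0u_n^2}$ and the Vitali argument you build on it are unsupported. You also misuse $(f_6)$: it is a statement only about sequences with $u_n\rightharpoonup0$ and $f(x,u_n)\to0$ in $L^1_1(\mathbb{B})$, and it yields $F(x,u_n)\to0$, not $F(x,u_n)\to F(x,u)$ for a general weak limit $u$. Finally, the nontriviality conclusion ``$I(u)=c\ge\delta>0$'' is unjustified: under critical exponential growth $I$ is not weakly continuous and the Cerami sequence can lose energy in the limit, so you cannot pass the level to $u$.

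The correct structure, which is the paper's, is: after boundedness, show that the weak limit $u$ solves (\ref{P}) following do \'O, where the convergence $f(x,u_n)\to f(x,u)$ in $L^1_1(\mathbb{B})$ comes from a.e.\ convergence plus the bound on $\int_{\mathbb{B}}f(x,u_n)u_n\,\frac{dx_1}{x_1}dx_2$ (a Figueiredo--Miyagaki--Ruf type lemma), not from uniform exponential integrability. Nontriviality is then proved by contradiction: if $u=0$, then $f(x,u_n)\to0$ in $L^1_1(\mathbb{B})$, hypothesis $(f_6)$ gives $F(x,u_n)\to0$ in $L^1_1(\mathbb{B})$, hence $\|u_n\|^2\to 2C_M<\alpha_2/\alpha_0$; only at this point does the cone Moser--Trudinger inequality apply along the sequence and, following do \'O, yield strong convergence $u_n\to0$, contradicting $I(u_n)\to C_M\ge\delta>0$. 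Your proposal inverts this logic, using the norm bound and $(f_6)$ before (and without) the hypothesis $u=0$ under which they are actually available, so Step 4 as written would fail.
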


The following lemmas will be used for proving our problems.


\begin{lemma}\label{l46}
Let $f$ satisfy  $(f_{2})$. Then $I(tu)\rightarrow-\infty$ as $t\rightarrow \infty$ for all nonnegative function
$u\in\mathcal{H}_{2,0}^{1,1}(\mathbb{B})\setminus \{0\}$.
\end{lemma}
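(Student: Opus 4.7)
\textbf{Proof proposal for Lemma~\ref{l46}.} The plan is to exploit the super-quadratic growth of $F$ given by $(f_2)$ to force the nonlinear term in $I(tu)$ to overwhelm the quadratic term $\tfrac{t^2}{2}\|u\|^2$ as $t\to\infty$. I would divide through by $t^2$ to rewrite
\begin{equation*}
\frac{I(tu)}{t^2} \;=\; \frac{1}{2}\|u\|^2 \;-\; \int_{\mathbb{B}}\frac{F(x,tu(x))}{t^2}\,\frac{dx_1}{x_1}\,dx_2,
\end{equation*}
so the lemma reduces to showing that the integral on the right diverges to $+\infty$.

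On the set $\{u>0\}$ I would factor the integrand as $u(x)^2\cdot F(x,tu(x))/(tu(x))^2$; since $tu(x)\to\infty$ pointwise there, $(f_2)$ forces this ratio to diverge pointwise to $+\infty$, while on $\{u=0\}$ the integrand is identically zero because $F(x,0)=0$. Because $u\geq 0$ is not identically zero, there exists $\delta>0$ such that the super-level set $A_\delta:=\{x:u(x)>\delta\}$ has positive cone measure, and Chebyshev's inequality applied to $u\in L_2^1(\mathbb{B})$ gives $|A_\delta|\leq|u|_2^2/\delta^2<+\infty$. I would then apply Fatou's lemma on this \emph{finite-measure} set $A_\delta$: fixing $M>0$, choosing $s_M$ with $F(x,s)\geq Ms^2$ for $|s|\geq s_M$ (from $(f_2)$), and using continuity of $F$ on the compact set $\bar{\mathbb{B}}\times[-s_M,s_M]$ to get $|F(x,s)|\leq C_M$ there, one obtains the uniform bound $F(x,tu)/t^2\geq Mu^2-(Ms_M^2+C_M)/t^2$, whose right-hand side is integrable over $A_\delta$. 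Fatou then yields $\liminf_{t\to\infty}\int_{A_\delta}F(x,tu)/t^2\,\tfrac{dx_1}{x_1}dx_2=+\infty$.

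The main obstacle will be to show that the contribution from the complement $\mathbb{B}\setminus A_\delta$ does not cancel this divergence, since the cone measure of this complement may well be infinite (because $\int_0^1 dx_1/x_1=+\infty$). I would handle it by splitting the complement as $\{u=0\}\sqcup\{0<u\leq s_M/t\}\sqcup\{s_M/t<u\leq\delta\}$. The first piece contributes zero; on the third piece $tu>s_M$ forces $F(x,tu)\geq M(tu)^2\geq 0$ by $(f_2)$, so its contribution is nonnegative; the only delicate piece is $\{0<u\leq s_M/t\}$. On it the integrand is bounded by $C_M/t^2$ in absolute value, and I would sharpen the bound using the refinement $F(x,s)=o(s)$ as $s\to 0$ (which follows from $f(x,0)=0$ together with continuity of $f$ on compacts) to replace the crude $C_M/t^2$ by $\varepsilon u/t$ on $\{0<u\leq s_0/t\}$ for small $s_0$; together with Cauchy--Schwarz on the decreasing family $\{0<u\leq s_0/t\}\downarrow\emptyset$ and the integrability of $u^2$, this yields a uniform-in-$t$ lower bound on the remainder. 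Combining the Fatou divergence on $A_\delta$ with this remainder estimate gives $I(tu)/t^2\to-\infty$, so $I(tu)\to-\infty$.
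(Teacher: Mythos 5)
Your overall route is genuinely different from the paper's, and most of it is sound: normalizing by $t^2$, extracting from $(f_2)$ the uniform bound $F(x,tu)/t^2\ge Mu^2-(Ms_M^2+C_M)/t^2$, and letting $t\to\infty$ on the finite-measure superlevel set $\{u>\delta\}$ does force that part of the integral to $+\infty$. The gap sits exactly in the step you yourself single out as delicate, the region $E_t=\{0<u\le s_0/t\}$. There you bound $|F(x,tu)|/t^2\le \varepsilon u/t$ and then invoke Cauchy--Schwarz together with $|u|_2<\infty$; but Cauchy--Schwarz gives $\int_{E_t}u\,\frac{dx_1}{x_1}dx_2\le |u|_2\,|E_t|^{1/2}$, and $E_t$ is a \emph{sublevel} set, whose cone measure is not controlled by $|u|_2$ (Chebyshev only bounds superlevel sets). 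In precisely the infinite-measure scenario this refinement was designed for, $|E_t|$ can be infinite and $\int_{E_t}u$ can itself be $+\infty$: in the coordinate $r=-\ln x_1$ the measure becomes $dr\,dx_2$, and a nonnegative function behaving like $1/r$ at infinity lies in $L^2$ with $\nabla_{\mathbb{B}}$-gradient in $L^2$ but not in $L^1$. So linear smallness $|F(x,s)|\le\varepsilon|s|$ near $s=0$ does not yield the claimed uniform-in-$t$ lower bound on the remainder. What would close it is quadratic smallness near zero, e.g. the bound $|F(x,s)|\le Cs^2$ for small $s$ supplied by $(f_4)$, after which the remainder is dominated by $C\int_{E_t}u^2\to 0$ since $E_t\downarrow\emptyset$; or else one must simply use finiteness of the cone volume of $\mathbb{B}$, in which case the remainder is at most $C_M|\mathbb{B}|/t^2$ and your whole decomposition becomes unnecessary.

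For comparison, the paper's proof is the short classical one: from $(f_2)$ and continuity it takes a uniform lower bound of the form $F(x,s)\ge Ms^2-C_M$ (written there as $F(x,t)\ge C_1|t|^{\theta}-C_2$), integrates to get $I(tu)\le \frac{t^2}{2}\|u\|^2-Mt^2|u|_2^2+C_M|\mathbb{B}|$, and chooses $M>\|u\|^2/(2|u|_2^2)$; it tacitly treats the cone volume $|\mathbb{B}|$ as finite, exactly as the constant $C(\mathbb{B})$ in Theorem \ref{t0} already presupposes. Under that same convention your argument is correct, just a heavier route (Fatou plus a three-way splitting where a one-line global bound suffices); the only point where your proposal fails as written is the infinite-measure refinement above, so either adopt the finite-volume convention explicitly or import the $(f_4)$-type quadratic control of $F$ near zero.
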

\begin{proof}
By the condition $(f_{2})$,  there exist constants $C_1,C_2$ such that   $$F(x,t)\geq C_1|t|^{\theta}-C_2.$$
Then
\begin{align}\label{e20}
  I(tu)&\leq \frac{t^2}{2}\|u\|^2-C_1t^{\theta}\int_{\mathbb{B}} |u|^{\theta}\frac{dx_1}{x_1}dx_2+C_2\nonumber\\
  &\leq \frac{t^2}{2}\left(\|u\|^2-C_1\int_{\mathbb{B}} |u|^{\theta}\frac{dx_1}{x_1}dx_2\right)+C_2.
\end{align}
Now, choose $M >\frac{\|u\|^2}{2|u|_2^2}$, we have $J(tu)\rightarrow \infty$ as $t\rightarrow \infty$, so $I$ satisfies (ii) of Proposition \ref{mmp1}.
\end{proof}

\begin{lemma}\label{l47}
Let $f$ satisfy  $(f_{1})$ and $(f_{4})$ . Then there exits $\delta,\rho>0$ such that
$$I(u)\geq \delta,\ \ \mbox{if}\ \ \|u\|=\rho.$$
\end{lemma}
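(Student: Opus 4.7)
The plan is to split $F(x,u)$ into a near-zero quadratic contribution controlled by $(f_4)$ and a high-growth exponential contribution controlled by $(f_1)$ together with the cone Moser--Trudinger inequality of Theorem \ref{t0}. Throughout, I will use that on $\mathcal{H}_{2,0}^{1,1}(\mathbb{B})$ the norm $\|u\|$ is equivalent to $|\nabla_{\mathbb{B}} u|_2$ by a Poincar\'e-type inequality and that $\lambda_1 = \inf\{|\nabla_{\mathbb{B}} u|_2^2/|u|_2^2 : u\in \mathcal{H}_{2,0}^{1,1}(\mathbb{B})\setminus\{0\}\}$ gives $\lambda_1|u|_2^2 \leq |\nabla_{\mathbb{B}} u|_2^2$.

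First I would use $(f_4)$ to pick $\varepsilon>0$ and $\tau>0$ with $2F(x,t) \leq (\lambda_1-\varepsilon)|t|^2$ for $|t|\leq \tau$. Next I would use the subcritical growth assumption $(f_1)$ together with $f(x,0)=0$ to show that, for every choice of $\alpha>0$ and $q>2$, there is a constant $C=C(\alpha,q,\tau)>0$ such that
\begin{equation*}
|F(x,t)| \leq \frac{\lambda_1-\varepsilon}{2}|t|^2 + C|t|^q\bigl(e^{\alpha t^2}-1\bigr),\qquad \text{for all }(x,t)\in \mathbb{B}\times \mathbb{R}.
\end{equation*}
(The quadratic part handles $|t|\leq \tau$ by $(f_4)$; for $|t|>\tau$, $(f_1)$ gives $|f(x,t)|\leq C'e^{\alpha t^2/2}$, hence $|F(x,t)|\leq C''|t|e^{\alpha t^2/2}\leq C|t|^q(e^{\alpha t^2}-1)$ after adjusting constants.)

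Substituting into $I$ and using $\lambda_1 |u|_2^2 \leq |\nabla_{\mathbb{B}}u|_2^2$ yields
\begin{equation*}
I(u) \geq \frac{1}{2}\|u\|^2 - \frac{\lambda_1-\varepsilon}{2\lambda_1}\|u\|^2 - C\int_{\mathbb{B}} |u|^q\bigl(e^{\alpha u^2}-1\bigr)\frac{dx_1}{x_1}dx_2 = \frac{\varepsilon}{2\lambda_1}\|u\|^2 - C\cdot J(u).
\end{equation*}
To bound $J(u)$, I would apply H\"older with exponents $r,r'$ (with $r>1$ to be chosen), giving
\begin{equation*}
J(u) \leq \Bigl(\int_{\mathbb{B}}|u|^{qr}\tfrac{dx_1}{x_1}dx_2\Bigr)^{1/r}\Bigl(\int_{\mathbb{B}}\bigl(e^{\alpha u^2}-1\bigr)^{r'}\tfrac{dx_1}{x_1}dx_2\Bigr)^{1/r'}.
\end{equation*}
Using the elementary inequality $(e^a-1)^{r'}\leq e^{r'a}-1$ for $a\geq 0,\ r'\geq 1$, and writing $u=\|u\|\cdot(u/\|u\|)$ with $|\nabla_{\mathbb{B}}(u/\|u\|)|_2\leq 1$, Theorem \ref{t0} bounds the exponential factor by a constant depending only on $\mathbb{B}$, provided
\begin{equation*}
r'\alpha\,\|u\|^2 \leq \alpha_2 = 4\pi.
\end{equation*}
For the polynomial factor, I would invoke the continuous embedding $\mathcal{H}_{2,0}^{1,1}(\mathbb{B})\hookrightarrow L_{qr}^{1}(\mathbb{B})$ (a consequence of Lemma \ref{l2} and Lemma \ref{l3}) to conclude $|u|_{qr}^{qr}\leq C\|u\|^{qr}$.

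Combining these estimates gives $J(u)\leq C\|u\|^q$, hence
\begin{equation*}
I(u) \geq \frac{\varepsilon}{2\lambda_1}\|u\|^2 - C\|u\|^q = \|u\|^2\Bigl(\frac{\varepsilon}{2\lambda_1} - C\|u\|^{q-2}\Bigr).
\end{equation*}
I would now fix $q>2$ and $\alpha>0$, choose $r>1$ close to $1$, and then choose $\rho>0$ so small that simultaneously (i) $r'\alpha\rho^2 < 4\pi$, ensuring the Moser--Trudinger step, and (ii) $C\rho^{q-2}\leq \varepsilon/(4\lambda_1)$. Setting $\delta := \varepsilon\rho^2/(4\lambda_1)$ gives $I(u)\geq \delta$ whenever $\|u\|=\rho$. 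The main obstacle is the coupled parameter choice: one must freely pick $\alpha$ arbitrarily small (permitted by the subcritical hypothesis $(f_1)$) to accommodate any $q>2$ and any $r>1$, while keeping $\rho$ small enough that the Moser--Trudinger bound still applies; the freedom granted by $(f_1)$ for all $\alpha>0$ is precisely what makes this compatibility possible.
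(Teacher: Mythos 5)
Your proposal is correct and follows essentially the same route as the paper: split $F$ via $(f_4)$ near zero and the exponential growth $(f_1)$ for large $t$, estimate the mixed term $\int|u|^q e^{\alpha u^2}$ by H\"older together with the cone Moser--Trudinger inequality (valid once $\|u\|$ is small enough that the rescaled exponent stays below $\alpha_2$), bound the polynomial factor by the Sobolev embedding, and conclude $I(u)\geq c\|u\|^2-C\|u\|^q$ with $q>2$, choosing $\rho$ small. The only cosmetic difference is which H\"older exponent sits on the exponential factor, and your remark that $\alpha$ must be taken small is unnecessary (smallness of $\rho$ alone suffices, as in the paper), but this does not affect the validity of the argument.
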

\begin{proof}
Using $(f_{1})$ and $(f_{4})$, there exists $k,\tau>0$ and $q>2$ such that
$$F(x,s)\leq \frac{1}{2} (\lambda_1-\tau)|s|^2+C|s|^qe^{ks^2},\ \ \mbox{for\ all}\ (x,s)\in \mathbb{B}\times \mathbb{R}.$$
By H\"{o}lder's inequality and the cone Moser-Trudinger embedding, we have
\begin{align}\label{e55}
  \int_{\mathbb{B}} |u|^qe^{ku^2}\frac{dx_1}{x_1}dx_2 &
  \leq  \left( \int_{\mathbb{B}} e^{kr\|u\|^2\frac{u^2}{\|u\|^2}}\frac{dx_1}{x_1}dx_2\right)^{\frac{1}{r}}\cdot  \left(\int_{\mathbb{B}} |u|^{r'q}\frac{dx_1}{x_1}dx_2\right)^{\frac{1}{r'}}\nonumber\\
  &\leq   C\left(\int_{\mathbb{B}} |u|^{r'q}\frac{dx_1}{x_1}dx_2\right)^{\frac{1}{r'}},
\end{align}
if $r>1$ sufficiently close to 1 and $\|u\|\leq \sigma$, where $κr\sigma^2 < \alpha_2$. Thus by the definition of $\lambda_1$ and
the Sobolev embedding:
$$I(u)\geq \frac{1}{2}\left(1-\frac{(\lambda_1-\tau)}{\lambda}\right)\|u\|^2-C\|u\|^q.$$
Since $\tau > 0$ and $q > 2$, we may choose $\rho, \delta > 0$ such that $I(u)\geq \delta$ if $\|u\|=\rho$.
\end{proof}

\begin{lemma}\label{l48}
Let $f$ satisfy $(f_{1})$--$(f_{3})$. Then the functional satisfies $(C)_c$ condition for $c\in \mathbb{R}$.
\end{lemma}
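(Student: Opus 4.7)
The plan is to verify the Cerami condition in two stages: first, show that any Cerami sequence $\{u_n\}\subset \mathcal{H}_{2,0}^{1,1}(\mathbb{B})$ at level $c$ is bounded; second, extract a strongly convergent subsequence using the subcritical exponential growth $(f_1)$ and the cone Moser--Trudinger inequality of Theorem \ref{t0}.

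For boundedness, combine $I(u_n)\to c$ with $(1+\|u_n\|)\|I'(u_n)\|\to 0$ to obtain
$$\int_{\mathbb{B}}\mathcal{F}(x,u_n)\,\frac{dx_1}{x_1}dx_2 = 2I(u_n)-\langle I'(u_n),u_n\rangle = 2c+o(1).$$
Assume by contradiction $\|u_n\|\to\infty$, set $v_n=u_n/\|u_n\|$, and pass to a subsequence with $v_n\rightharpoonup v$ weakly, $v_n\to v$ a.e. and in $L^q_1(\mathbb{B})$ for each $q\geq 1$ by Lemma \ref{l3}(5). If $v\not\equiv 0$, then on $\{v\neq 0\}$ we have $|u_n|\to\infty$, and $(f_2)$ together with Fatou applied to $F(x,u_n)/\|u_n\|^2=(F(x,u_n)/u_n^2)v_n^2$ forces $\int_{\mathbb{B}}F(x,u_n)/\|u_n\|^2\to\infty$, contradicting $I(u_n)/\|u_n\|^2\to 0$. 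If $v\equiv 0$, apply the Jeanjean truncation: choose $t_n\in[0,1]$ with $I(t_n u_n)=\max_{t\in[0,1]}I(tu_n)$. For any fixed $R>0$, $R/\|u_n\|\in[0,1]$ eventually, so $I(t_nu_n)\geq I(Rv_n)$. Using $(f_1)$ to dominate $F(x,s)\leq C_\alpha(e^{\alpha s^2}-1)+Cs^2$, picking $\alpha,r>1$ with $r\alpha R^2<\alpha_2$, invoking Theorem \ref{t0} on the normalized sequence $Rv_n$ to get equi-integrability, and using $v_n\to 0$ a.e., Vitali gives $\int_{\mathbb{B}}F(x,Rv_n)\,\frac{dx_1}{x_1}dx_2\to 0$ and hence $I(Rv_n)\to R^2/2$. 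Therefore $I(t_nu_n)\to\infty$. Since $I(0)=0$ and $I(u_n)\to c$, necessarily $t_n\in(0,1)$ for large $n$, so $\langle I'(t_nu_n),t_nu_n\rangle=0$ and
$$2I(t_nu_n)=\int_{\mathbb{B}}\mathcal{F}(x,t_nu_n)\,\frac{dx_1}{x_1}dx_2\leq \theta\int_{\mathbb{B}}\mathcal{F}(x,u_n)\,\frac{dx_1}{x_1}dx_2 = O(1)$$
by $(f_3)$ (applied with $s=t_n$), contradicting $I(t_nu_n)\to\infty$.

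With $\{u_n\}$ bounded, say $\|u_n\|\leq K$, extract $u_n\rightharpoonup u$ in $\mathcal{H}_{2,0}^{1,1}(\mathbb{B})$, $u_n\to u$ a.e. and in $L^q_1(\mathbb{B})$ for every $q\geq 1$. Fix $\alpha>0$ small and $r>1$ with $r\alpha K^2<\alpha_2$. By $(f_1)$ we have $|f(x,u_n)|^r\leq C(e^{r\alpha u_n^2}-1)+C|u_n|^{2r}$, and Theorem \ref{t0} bounds the first term uniformly, so $\{f(x,u_n)\}$ is bounded in $L^r_1(\mathbb{B})$. Vitali's theorem then yields $\int_{\mathbb{B}}f(x,u_n)\varphi\,\frac{dx_1}{x_1}dx_2\to \int_{\mathbb{B}}f(x,u)\varphi\,\frac{dx_1}{x_1}dx_2$ for each $\varphi\in\mathcal{H}_{2,0}^{1,1}(\mathbb{B})$, so $u$ is a critical point. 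Applying $\langle I'(u_n),u_n-u\rangle\to 0$ together with $\int_{\mathbb{B}}f(x,u_n)(u_n-u)\,\frac{dx_1}{x_1}dx_2\to 0$ (Hölder plus strong $L^{r'}_1$-convergence of $u_n-u$) gives $\|u_n\|^2\to\|u\|^2$, which combined with weak convergence in the Hilbert space $\mathcal{H}_{2,0}^{1,1}(\mathbb{B})$ upgrades to strong convergence.

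The main obstacle is the boundedness step in the delicate case $v\equiv 0$. Because $(f_3)$ is a much weaker quasi-monotonicity than the Ambrosetti--Rabinowitz condition, one cannot directly read off $\|u_n\|=O(1)$ from $\int \mathcal{F}(x,u_n)=O(1)$. The Jeanjean truncation is the right device, but it only works once the cone Moser--Trudinger inequality (Theorem \ref{t0}) is available on normalized sequences; the need for equi-integrability forces the strict subcritical inequality $r\alpha R^2<\alpha_2$, which is precisely what the subcritical growth in $(f_1)$ allows.
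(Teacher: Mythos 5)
Your proposal is correct and follows essentially the same route as the paper: Jeanjean's truncation $t_n$ combined with $(f_3)$ and the cone Moser--Trudinger inequality for the vanishing case, Fatou with $(f_2)$ for the nonvanishing case, and then subcritical growth plus H\"older and the $\langle I'(u_n)-I'(u),u_n-u\rangle\to 0$ identity to upgrade weak to strong convergence. The only cosmetic difference is that you compute $I(Rv_n)\to R^2/2$ via Vitali, whereas the paper bounds $I(mw_n)$ directly from the growth estimate and lets $m\to\infty$; the substance is identical.
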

\begin{proof}
Let $\{u_n\}$ be a  $(C)_{c}$ sequence of $I$.   We first show that  $\{u_n\}$ is bounded.
If $\{u_n\}$ is unbounded, up to a subsequence we may assume that for some $c\in \mathbb{R}$,
\begin{equation}\label{e054}
I(u_n)\rightarrow c,\ \ \|u_n\|\rightarrow \infty,\ \ \|I^{\prime}(u_n)\|\cdot\|u_n\|\rightarrow 0.
\end{equation}
So we have
\begin{equation}\label{e055}
\displaystyle\lim_{n\rightarrow\infty}\left(\int_{\Omega}\frac{1}{2}\int_{\Omega}\mathcal{F}(x, u_n)dx\right)=\displaystyle\lim_{n\rightarrow\infty}\left\{I(u_n)-\frac{1}{2}\langle I^{\prime}(u_n), u_n\rangle\right\}=c,
\end{equation}
Let $w_n=\frac{u_n}{\|u_n\|}$,  up to a subsequence we may assume that
\begin{equation}
w_n\rightharpoonup w \ \mathrm{in}\ \mathcal{H}_{2,0}^{1,1}(\mathbb{B}),\ \ w_n\rightarrow w\ \mathrm{in}\ L_p^1(\mathbb{B}),\ \ w_n\rightarrow w\ \mathrm{a.e.}\ x\in\mathbb{B}.
\end{equation}
We may similarly show that $w^{+}_n\rightarrow w^{+}$ in   $\mathcal{H}_{2,0}^{1,1}(\mathbb{B})$, where $w^{+}=\max\{w,0\}$.
If $w=0$, similar to $p$-Laplacian case in \cite{MR1718530, MR1828880},
 we can choose a sequence $\{t_n\}\subset \mathbb{R}$ such that
\begin{equation}
 I(t_nu_n)=\max_{t\in[0, 1]}I(tu_n).
\end{equation}
For any given $R>0$, by $(f_1)$ , there exists $C=C(m)>0$ such that
\begin{equation}\label{e62}
 F(x,s)\leq C|s|+e^{\frac{\alpha_2}{m^2}s^2}, \ \ \mbox{for\ all} \ (x,s)\in \mathbb{B}\times \mathbb{R}.
\end{equation}
Also since $\|u_n\|\rightarrow \infty$, we have
\begin{equation}\label{e64}
 I(t_nu_n)\geq I\left(\frac{m}{\|u_n\|}u_n\right)=I(mw_n)
\end{equation}
and by  \eqref{e62} and the fact $\int_{\mathbb{B}}F(x,w_n) \frac{dx_1}{x_1}dx_2=\int_{\mathbb{B}}F(x,w_n) \frac{dx_1}{x_1}dx_2$, we obtain
\begin{align}\label{e63}
  2I(mw_n) & \geq  m^2-2cm\int_{\mathbb{B}}|w_{n}^{+}| \frac{dx_1}{x_1}dx_2-2\int_{\mathbb{B}} e^{\alpha_2 |w_n^{+}|^2}\frac{dx_1}{x_1}dx_2\nonumber\\
   & \geq   m^2-2cm\int_{\mathbb{B}}|w_{n}^{+}| \frac{dx_1}{x_1}dx_2-2\int_{\mathbb{B}} e^{\alpha_2 |w_n|^2}\frac{dx_1}{x_1}dx_2.
\end{align}
Since $\|w_n\| = 1$, we have that
$\int_{\mathbb{B}} e^{\alpha_2 |w_n|^2}\frac{dx_1}{x_1}dx_2$ is bounded by a universal constant $C (\mathbb{B}) >0$ by the Moser-Trudinger inequality.
Also, since $w_n^{+}\rightharpoonup 0$ in  $\mathcal{H}_{2,0}^{1,1}(\mathbb{B})$,
we have that
$\int_{\mathbb{B}}|w_{n}^{+}| \frac{dx_1}{x_1}dx_2\rightarrow 0$. Thus using (\ref{e64}) and letting $n\rightarrow \infty$ in (\ref{e63}),
and then letting $m\rightarrow \infty$, we get
$$I(t_nu_n)\rightarrow\infty.$$

Note that  $I(0)=0$, $I(u_n)\rightarrow c$, we see that $t_n\in(0, 1)$ and
\begin{align}
\int_{\mathbb{B}}|\nabla t_nu_n|^{2)}\frac{dx_1}{x_1}dx_2&-\int_{\mathbb{B}}f(x, t_nu_n)t_nu_n\frac{dx_1}{x_1}dx_2=\langle I^{\prime}(t_nu_n), t_nu_n\rangle\nonumber\\
&=t_n\frac{d}{dt}\bigg|_{t=t_n}I(tu_n)=0.
\end{align}
 Therefore, by the condition ($f_{3}$),
 \begin{align}\label{e312}
\frac{1}{2}\int_{\mathbb{B}}\mathcal{F}(x, u_n)\frac{dx_1}{x_1}dx_2&\geq\frac{1}{2}\int_{\mathbb{B}}\frac{\mathcal{F}(x, t_nu_n)}{\theta}\frac{dx_1}{x_1}dx_2\nonumber\\
&\geq\frac{1}{\theta}\left(I(t_nu_n)-\frac{1}{2}\langle I'(t_nu_n), t_nu_n\rangle\right)\nonumber\\
&=\frac{1}{\theta}I(t_nu_n)\rightarrow \infty.
\end{align}
This contradicts with \eqref{e055}.

Now from the first limit in \eqref{e054}, when $\|u_n\|\geq 1$ we obtain
\begin{equation}\label{new}
\frac{1}{2}\|u_n\|^{2}-(c+o(1))\geq\int_{\mathbb{B}}F(x, u_n)\frac{dx_1}{x_1}dx_2.
\end{equation}
Using \eqref{new} and the condition ($f_2$) we deduce
\begin{align}\label{e514}
\frac{1}{2}-\frac{c+o(1)}{\|u_n\|^{2}}&\geq\int_{\mathbb{B}}\frac{F(x, u^{+}_n)}{\|u_n\|^{2}}\frac{dx_1}{x_1}dx_2\nonumber\\
&=\left(\int_{w=0}+\int_{w\not=0}\right)\frac{F(x, u^{+}_n)}{|u^{+}_n|^{2}}|w^{+}_n|^{2}\frac{dx_1}{x_1}dx_2\nonumber\\
&\geq\int_{w\not=0}\frac{F(x, u^{+}_n)}{|u^{+}_n|^{2}}|w^{+}_n|^{2}\frac{dx_1}{x_1}dx_2-\Lambda\int_{w^{+}=0}|w^{+}_n|^{2}\frac{dx_1}{x_1}dx_2.
\end{align}
For $x\in \Theta:=\{x\in \mathbb{B}: w^{+}(x)\not=0\}$, we have $|u^{+}_n(x)|\rightarrow+\infty$. By the condition ($f_{2}$) we have
\begin{equation}
    \frac{f(x, u^{+}_n)u_n}{|u^{+}_n|^{2}}|w^{+}_n|^{2}\rightarrow +\infty.
\end{equation}
Note that the Lebesgue measure of $\Theta$ is positive, using the Fatou Lemma we deduce
\begin{equation}
    \int_{w^{+}\not=0}\frac{f(x, u^{+}_n)u^{+}_n}{|u^{+}_n|^{2}}|w^{+}_n|^{2}\frac{dx_1}{x_1}dx_2\rightarrow +\infty.
\end{equation}
This contradicts with (\ref{e514}).

This proves that $\{u_n\}$ is bounded in $\mathcal{H}_{2,0}^{1,1}(\mathbb{B})$.
Without loss of generality, suppose that
\begin{equation} \label{e57}
 \left\{
\begin{array}{l}
  \|u_n\|\leq K,\\
    u_n\rightharpoonup u\ \ \mbox{in} \ \  \mathcal{H}_{2,0}^{1,1}(\mathbb{B}), \\
   u_n \rightarrow u \ \ \mbox{a.e.} \ \mathbb{B},\\
   u_n\rightarrow u \ \ \mbox{in}  \ \ L_p^{1}(\mathbb{B}), \mbox{\  for\  all\ } p>1.
   \end{array}\right.
\end{equation}
Now, since $f$ has the subcritical exponential growth on $\mathbb{B}$, we can find a constant $c_K > 0$ such
that
$$f(x,s)\leq c_Ke^{\frac{\alpha_2}{2K^2}s^2},\ \ \mbox{for\ all}\ (x,s)\in \mathbb{B}\times \mathbb{R}.$$
Then from the cone Moser-Trudinger inequality, we deduce
\begin{align}\label{e58}
  \left| \int_{\mathbb{B}} |f(x,u_n)(u_n-u) \frac{dx_1}{x_1}dx_2  \right| &\leq \int_{\mathbb{B}} |f(x,u_n)(u_n-u)| \frac{dx_1}{x_1}dx_2 \nonumber \\
   &\leq \left(\int_{\mathbb{B}} |f(x,u_n)|^2\frac{dx_1}{x_1}dx_2\right)^{\frac{1}{2}} \cdot  \left(\int_{\mathbb{B}} |u_n-u|^2 \frac{dx_1}{x_1}dx_2\right)^{\frac{1}{2}} \nonumber \\
   &\leq   C\left(\int_{\mathbb{B}}  e^{\frac{\alpha_2}{K^2}u_n^2}\frac{dx_1}{x_1}dx_2\right)^{\frac{1}{2}}\cdot  \|u_n-u\|_2\nonumber \\
   &\leq   C\left(\int_{\mathbb{B}}  e^{\frac{\alpha_2}{K^2}\|u_n\|^2\left(\frac{u_n}{\|u_n\|}\right)^2}\frac{dx_1}{x_1}dx_2\right)^{\frac{1}{2}}\cdot  \|u_n-u\|_2\nonumber \\
   &\leq C\|u_n-u\|_2\rightarrow 0,\  (n\rightarrow \infty).
\end{align}
Similarly, since $u_n\rightharpoonup u\  \mbox{in} \   \mathcal{H}_{2,0}^{1,1}(\mathbb{B})$,
$\int_{\mathbb{B}} f(x,u)(u_n-u) \frac{dx_1}{x_1}dx_2\rightarrow 0$.
 Thus we can conclude that
 \begin{equation}\label{e61}
   \int_{\mathbb{B}}  (f(x,u_n)-f(x,u)) (u_n-u) \frac{dx_1}{x_1}dx_2\rightarrow 0,\,  \text{as}\, n\rightarrow \infty.
 \end{equation}
Moreover, by \eqref{e054}
\begin{equation}\label{e60}
 (I'(u_n)-I'(u),u_n-u) \rightarrow 0,\,  \text{as}\, n\rightarrow \infty.
\end{equation}
From \eqref{e61} and \eqref{e60}, we get
$$\int_{\mathbb{B}}|\nabla_{\mathbb{B}} u_n-\nabla_{\mathbb{B}} u|^2\frac{dx_1}{x_1}dx_2\rightarrow 0,\,  \text{as}\, n\rightarrow \infty.$$
So we have $u_n\rightarrow u$  strongly in $\mathcal{H}_{2,0}^{1,1}(\mathbb{B})$
 which shows that $I$ satisfies $(PS)_c$ condition.
\end{proof}

\begin{proof}[\textbf{Proof of Theorem \ref{t41}}]
By Lemma \ref{l46}-- Lemma \ref{l48} and Mountain Pass Theorem (Proposition  \ref{mmp1}), it is clear that we can
 deduce that the problem (\ref{P}) has a nontrivial weak solution.
\end{proof}

\begin{proof}[\textbf{Proof of Theorem \ref{t42}}]
Similar to the proof of Theorem  \ref{t41}, by our conditions, we see that the functional $I$ satisfies $(C)_c$ condition.
Now we consider the Moser functions
\begin{equation}\label{e70}
 \bar{M}_2(x)=\frac{1}{\omega_1^2}
 \left\{
\begin{aligned}
  &\sqrt{\ln 2}, \ 0\leq |x|\leq \frac{1}{2},\\
    &\frac{\ln(1/|x|)}{\sqrt{\ln2}},\  \frac{1}{2}\leq |x|\leq 1, \\
  &0, \   |x|\geq 1.
   \end{aligned}\right.
\end{equation}
Obviously, $\bar{M}_2(x)\in \mathcal{H}_{2,0}^{1,1}(B_1(1,0))$ and $\|M_n\|=1$, for all $n\in \mathbb{N}$. Since $d$ is the inner radius of $\mathbb{B}$,
we can find $x_0\in \mathbb{B}$, such that $B_d(x_0)\in \mathbb{B}$. Moreover, we set $M_2(x)=\bar{M}_2\left(\frac{x-x_0}{d}\right)$. And we see  that
$M_2(x)\in \mathcal{H}_{2,0}^{1,1}(B_1(1,0))$, $\|M_2\|=1$ and $\mbox{supp}M_2=B_d(x_0)$. As in proof Theorem 1.3 in \cite{Fig}, we can deduce that
$$\max\{I(tM_2): t\geq 0\}<\frac{1}{2}\left(\frac{\alpha_2}{\alpha_0}\right).$$
It is easy to show that $I$ satisfy the  mountain pass geometry. Hence, we can find a Cerami sequence $\{u_n\}$ such that
\begin{equation}\label{e71}
I(u_n)\rightarrow C_M<\frac{1}{2}\left(\frac{\alpha_2}{\alpha_0}\right),\ \ \ \|I^{\prime}(u_n)\|\cdot\|u_n\|\rightarrow 0.
\end{equation}
We shall prove that $\{u_n\}$ is bounded in $\mathcal{H}_{2,0}^{1,1}(\mathbb{B})$. In fact, if we suppose that $\{u_n\}$ is unbounded,
let $w_n=\frac{u_n}{\|u_n\|}$,  up to a subsequence, and  we may assume that

\begin{equation}
w_n\rightharpoonup w \ \mathrm{in}\ \mathcal{H}_{2,0}^{1,1}(\mathbb{B}),\ \ w_n\rightarrow w\ \mathrm{in}\ L_p^1(\mathbb{B}),\ \ w_n\rightarrow w\ \mathrm{a.e.}\ x\in\mathbb{B}.
\end{equation}
We may similarly show that $w^{+}_n\rightarrow w^{+}$ in   $\mathcal{H}_{2,0}^{1,1}(\mathbb{B})$, where $w^{+}=\max\{w,0\}$.
Let $t_n\in [0,1]$ such that $$I(t_nu_n)=\max_{t\in[0,1]} I(tu_n),$$
and $m\in\left(0,\frac{1}{2}\left(\frac{\alpha_2}{\alpha_0}\right)^{\frac{1}{2}}\right) $. Choose $\varepsilon=\frac{\alpha_2}{m^2}-\alpha_0>0$,
according to the condition $(f_1)$, there exists $C>0$ such that
\begin{equation}\label{e74}
 F(x,s)\leq C|s|+   \left|\frac{\alpha_2}{m^2}-\alpha_0\right| e^{(\alpha_0+\varepsilon)s^2},\ \ \mbox{for\ all}\ (x,s)\in \mathbb{B}\times \mathbb{R}.
\end{equation}
Since $\|u_n\|\rightarrow \infty$, we deduce
\begin{equation}\label{e72}
 I(t_nu_n)\geq I\left(\frac{m}{\|u_n\|}u_n\right)=I(mw_n),
\end{equation}
and by (\ref{e74}) and  $\|w_n\|=1$, it follows that
\begin{align}\label{e73}
  2I(mw_n) & \geq  m^2-2cm\int_{\mathbb{B}}|w_{n}^{+}| \frac{dx_1}{x_1}dx_2-2 \left|\frac{\alpha_2}{m^2}-\alpha_0\right|\int_{\mathbb{B}}
   e^{(\alpha_0+\varepsilon) m^2 w_n^2}\frac{dx_1}{x_1}dx_2.
\end{align}
From the cone  Moser-Trudinger inequality (Lemma 2.3), we know that
$$\int_{\mathbb{B}}e^{(\alpha_0+\varepsilon) m^2 w_n^2}\frac{dx_1}{x_1}dx_2=
\int_{\mathbb{B}}e^{\alpha_2w_n^2}\frac{dx_1}{x_1}dx_2$$
is bounded by an universal constant $C (\mathbb{B}) > 0$ thanks to the choice of $\varepsilon$.
Also, since $w_n^{+}\rightharpoonup 0$ in  $\mathcal{H}_{2,0}^{1,1}(\mathbb{B})$,
we have that
$\int_{\mathbb{B}}|w_{n}^{+}| \frac{dx_1}{x_1}dx_2\rightarrow 0$. Thus if we let $n\rightarrow \infty$ in  \eqref{e73},
and then let $m\rightarrow \left[\left(\frac{\alpha_2}{\alpha_0}\right)^{\frac{1}{2}}\right]^{-}$ and using \eqref{e72}, we obtain
\begin{equation}\label{e75}
  \liminf_{n\rightarrow \infty} I(t_nu_n)\geq\frac{1}{2} \left(\frac{\alpha_2}{\alpha_0}\right)>C_M.
\end{equation}
Now note that $I(0)=0$ and $I(u_n)\rightarrow C_M$, we can assume that $t_n\in (0,1)$.  Since $I'(t_nu_n)t_nu_n=0,$ we get
$$t_n^2\|u_n\|^2=\int_{\mathbb{B}} f(x,t_nu_n) t_nu_n\frac{dx_1}{x_1}dx_2.$$
Also, \eqref{e71} implies that
$$\int_{\mathbb{B}} [f(x,u_n)u_n-2F(x,u_n)]\frac{dx_1}{x_1}dx_2=\|u_n\|^2+2C_M-\|u_n\|^2+o(1)=2C_M+o(1).$$
According to the condition $(f_3)$, we know that
\begin{align}\label{e76}
  2I(t_nu_n)&=t_n^2\|u_n\|^2-\int_{\mathbb{B}} 2F(x,t_nu_n)\frac{dx_1}{x_1}dx_2 \nonumber \\
   &=\int_{\mathbb{B}} [f(x,t_nu_n)t_nu_n-2F(x,t_nu_n)]\frac{dx_1}{x_1}dx_2\nonumber \\
   &\leq \int_{\mathbb{B}} [f(x,u_n)u_n-2F(x,u_n)]\frac{dx_1}{x_1}dx_2\nonumber \\
   &= 2C_M+o(1),
\end{align}
which  contradicts with \eqref{e75}.    Therefore, $\{u_n\}$ is bounded in  $\mathcal{H}_{2,0}^{1,1}(\mathbb{B})$.
Then, up to a subsequence,
we can suppose that $u_n \rightharpoonup u$ in $\mathcal{H}_{2,0}^{1,1}(\mathbb{B})$.
Now, following the proof of Lemma 4 in \cite{do}, we
know that $u$ is a weak solution of (\ref{P}). So  we only need to show that  $u\not=0$.
Indeed, if  $u=0$,
as in \cite{do}, we have $f(x,u_n)\rightarrow 0$ in $L^1_1(\mathbb{B})$.
The condition $(f_{6})$ implies that  $F(x, u_n) \rightarrow0 $ in
$L_1^1(\mathbb{B})$ and we
can get
\begin{equation}\label{e77}
\lim_{n\rightarrow \infty}\|u_n\|^2=2C_M<\frac{\alpha_2}{\alpha_0}
\end{equation}
and again, following the proof in \cite{do}, we have a contradiction.

 The proof is completed.

\end{proof}

\section*{Acknowledgements.}
The first author is supported by  Beijing Municipal Natural Science Foundation  (No. 1172005) and  NSFC (No. 11626038). 
 The second author is supported by NSFC
(No. 11301181) and China Postdoctoral Science Foundation.


\begin{thebibliography}{99}
\addtolength{\itemsep}{-1.5 em} 
\setlength{\itemsep}{-5pt}

\bibitem{Adac}
S. Adachi,  K.Tanaka, Trudinger type inequalities in $\mathbb{R}^N$ and their best exponents. Proc. Am. Math. Soc. 128(1999) 2051--2057.



\bibitem{AF} R.A. Adams, John J.F. Fournier, Sobolev Spaces, second edition, Elsevier, Academic Press, Amsterdam, 2003.



\bibitem{Ada}
D. R. Adams, A sharp inequality of J. Moser for higher order derivatives, Ann. of Math. 128 (1988) 385--398.




\bibitem{Alim}
M.Alimohammady, C.Cattanib, M. Koozehgar Kalleji, Invariance and existence analysis for semilinear hyperbolic equations with damping and conical singularity,
J. Math. Anal. Appl, https://doi.org/10.1016/j.jmaa.2017.05.057.





\bibitem{MR0370183}
A. Ambrosetti, P. H. Rabinowitz, Dual variational methods in critical point theory and applications,  J. Functional Analysis 14 ( 1973) 349--381.


\bibitem{Cao}
D. Cao, Nontrivial solution of semilinear elliptic equations with critical exponent in $R^2$, Comm. Partial Differential
Equations 17 (1992) 407--435.









\bibitem{Car}
L. Carleson, S. Y. A. Chang, On the existence of an extremal function for an inequality of J. Moser, Bull. Sc. Math. 110 (1986), 113--127.

\bibitem{zhang}
A. Cianchi, E. Lutwak, D. Yang, G.Y. Zhang, Affine Moser-Trudinger and Morrey-Sobolev inequalities, Calc. Var. Partial Differential Equations 36(3)(2009) 419--436.

\bibitem{chang}
S.Y.A. Chang, P. Yang, The inequality of Moser and Trudinger and applications to conformal geometry. Comm. Pure Appl. Math. 56 (2003) 1135--1150.


\bibitem{CLiu}

H. Chen, G. Liu, Global existence and nonexistence for semilinear parabolic equations with conical degeneration, J.
Pseudo-Differ. Oper. Appl. 3 (3) (2012) 329--349.


\bibitem{CLW}
H. Chen, X. Liu, Y. Wei, Cone Sobolev inequality and Dirichlet problem for nonlinear elliptic equations on manifold with
conical singularites, Calc. Var. Partial Differential Equations  43 (2012) 463--484.



\bibitem{CLW2}
H. Chen, X. Liu, Y. Wei, Existence theorem for a class of semilinear totally characteristic elliptic equations with critical cone
Sobolev exponents, Ann. Global Anal. Geom. 39 (1) (2011) 27--43.

\bibitem{Chen3}
H. Chen, X.C. Liu, Y.W. Wei,  Multiple solutions for semilinear totally characteristic elliptic equations with subcritical or critical cone Sobolev exponents,
J. Differential Equations 252 (2012) 4200--4228.


\bibitem{css}
S. Coriasco, E. Schrohe, J. Seiler,  Realizations of differential operators on conic manifolds with boundary,
Ann. Glob. Anal. Geom. 31(2007) 223--285.

\bibitem{Fig}
D. G. de Figueiredo, O. H. Miyagaki, B. Ruf, Elliptic equations in $\mathbb{R}^2$ with nonlinearities in the critical
growth range, Calc. Var. Partial Differential Equations 3(2) (1995) 139--153.


\bibitem{do}
J.M. do \'{O}, Semilinear Dirichlet problems for the $N$-Laplacian in $\mathbb{R}^N$ with nonlinearities in the critical
growth range, Differential Integral Equations 9  (5)(1996) 967--979.



\bibitem{es}
 Ju.V. Egorov,  B.W. Schulze, Pseudo-differential operators, singularities, applications, In: Operator
Theory, Advances and Applications, vol. 93. Birkh\"{a}user Verlag, Basel (1997).

\bibitem{Flu}
M. Flucher, Extremal functions for the Trudinger-Moser inequality in 2 dimensions, Comm. Math. Helvetici 67 (1992) 471--497.


\bibitem{MR1718530}
L. Jeanjean, On the existence of bounded {P}alais-{S}male sequences and
  application to a {L}andesman-{L}azer-type problem set on $\mathbb{R}^N$, Proc.
  Roy. Soc. Edinburgh Sect. A 129~(4) (1999) 787--809.

\bibitem{luguozhen}
N. Lam,  G.Z.  Lu,  H.L. Tang,  Sharp Affine and Improved Moser-Trudinger-Adams Type Inequalities on Unbounded Domains in the Spirit of Lions, J. Geom. Anal.
27 (2017) 300--334.

\bibitem{LamLu}
N. Lam, G.Z.  Lu, $N$-Laplacian equations in $\mathbb{R}^N$ with subcritical and critical growth without the Ambrosetti-Rabinowitz condition, Adv. Nonlinear Stud. 13 (2) (2013) 289-308.





\bibitem{Len}
G. Leoni, A First Course in Sobolev Spaces，Graduate Studies in Mathematics Volume 105, American Mathematical Society Providence, Rhode Island, 2009.



\bibitem{Lixy}
Y.X. Li, Extremal functions for the Moser-Trudinger inequalities on compact Riemannian manifolds, Science in China Series A: Mathematics 48 (2005) 618--648.



\bibitem{Lions}
P.L. Lions,  The concentration-compactness principle in the calculus of variations, The limit case. II.
Rev. Mat. Iberoamericana 1(2)(1985) 45--121




\bibitem{mali}
L. Ma, B.W. Schulze, Operators on manifolds with conical singularities, J. Pseudo-Differ. Oper. Appl. 1 (2010) 55--74.




\bibitem{Mcl}
J. B. McLeod, L. A. Peletier, Observations on Moser's inequality, Arch. Rat. Mech. Anal. 106 (1989) 261--285.


\bibitem{Mo}
J. Moser, A sharp form of an inequality by N. Trudinger, Indiana Univ. Math. J. 20 (1979) 1077-1092.

\bibitem{Oga}
T. Ogawa, A proof of Trudinger's inequality and its application to nonlinear Schr\"{o}dinger equations, Nonlinear Anal. 14 (1990) 765--769.

\bibitem{Oza}
T. Ozawa, On critical cases of Sobolev's inequalities, J. Funct. Anal. 127 (1995) 259--269.


\bibitem{ss}
E. Schrohe, J. Seiler, Ellipticity and invertibility in the cone algebra on $L^{p}$-Sobolev spaces, Integr. Equ.
Oper. Theory 41 (2001) 93--114.


\bibitem{sb}
B.W. Schulze,  Boundary value problems and singular pseudo-differential operators. Wiley, Chichester (1998).


\bibitem{Stri}
R. S. Strichartz, A note on Trudinger's extension of Sobolev's inequalities, Indiana Univ. Math. J. 21 (1972) 841--842.

\bibitem{Str}
M. Struwe, Critical points of embeddings of  $H_0^{1,n}$ into Orlicz spaces, Ann. Inst. Henri Poincar\'{e}, Analyse non lin\'{e}aire 5 (1988) 425--464.

\bibitem{tian}
G.Tian,X.H. Zhu, A nonlinear inequality of Moser-Trudinger type. Calc. Var. Partial Differ. Equ. 10(4)(2000) 349--354.


\bibitem{Tr}
N. S. Trudinger, On imbeddings into Orlicz spaces and some applications, J. Math. Mech.
17 (1967) 473--484.



\bibitem{Yang}
Y.Y. Yang Trudinger-Moser inequalities on complete noncompact Riemannian manifolds,  J. Functional Analysis 263 (2012) 1894--1938.


\bibitem{Pohozaev}
S. Pohozaev, The Sobolev embedding in the special case $pl = n$, in: Proceedings of the Technical Scientific Conference
on Advances of Scientific Research 1964-1965, Mathematics Sections, Moscov. Energet. Inst., Moscow,
(1965) 158--170.




\bibitem{MR1828880}
W. M. Zou, Variant fountain theorems and their applications, Manuscripta Math.
  104~(3) (2001) 343--358.

\end{thebibliography}
\end{document}